\documentclass{scrartcl}
%
%


\usepackage[T1]{fontenc}
\usepackage[utf8]{inputenc}
\usepackage{amsmath, amsthm, amssymb}
\usepackage{dsfont}
\usepackage{graphicx}
\usepackage{color}
\usepackage[format=plain]{caption}
\usepackage{capt-of}
\usepackage{subcaption}
\usepackage{multicol}
\usepackage{bm}
\usepackage{dlfltxbcodetips} 

\newcommand{\norm}[1]{\left\| #1 \right\|}  
\newcommand{\N}{\mathbb{N}}  

\newcommand{\R}{\mathbb{R}}


\newcommand{\spn}{\operatorname{span}}


\newcommand{\eps}{\varepsilon}
\renewcommand{\phi}{\varphi}
\newcommand{\ul}{\underline}
\newcommand{\ol}{\overline}

\newcommand{\diam}{\operatorname{diam}}

\numberwithin{equation}{section}

\newtheorem{thm}{Theorem}[section]
\newtheorem{cor}[thm]{Corollary}
\newtheorem{prop}[thm]{Proposition}
\newtheorem{lm}[thm]{Lemma}
\newtheorem{cond}[thm]{Condition}

\theoremstyle{definition} \newtheorem{algo}[thm]{Algorithm}
\theoremstyle{definition} \newtheorem{ex}[thm]{Example}
\theoremstyle{definition}



\title{Approximate solutions of convex semi-infinite optimization problems in finitely many iterations}

\author{Jochen Schmid and Miltiadis Poursanidis\\  
\small Fraunhofer Institute for Industrial Mathematics (ITWM), 67663 Kaiserslautern, Germany\\ 
\small jochen.schmid@itwm.fraunhofer.de}  

\date{}

\begin{document}

\maketitle

\begin{abstract}
\small{ \noindent 
We develop two 
adaptive discretization algorithms for convex semi-infinite optimization, 
which terminate after finitely many iterations at approximate solutions of arbitrary precision. In particular, they terminate at a feasible point of the considered optimization problem. Compared to the existing finitely feasible algorithms for general semi-infinite optimization problems, our algorithms work with considerably smaller discretizations and are thus computationally favorable. 
Also, our algorithms terminate at approximate solutions of arbitrary precision, while for general semi-infinite optimization problems the best possible approximate-solution precision can be arbitrarily bad. All occurring finite optimization subproblems in our algorithms have to be solved only approximately, and continuity is the only regularity assumption on our objective and constraint functions. Applications to parametric and non-parametric regression problems under shape constraints are discussed. 
}
\end{abstract}

{ \small \noindent 
Index terms: convex semi-infinite optimization, finitely feasible adaptive discretization algorithm, regression under shape constraints
}

\section{Introduction}

In this paper, we are concerned with convex semi-infinite optimization~\cite{HeKo93, Polak, Reemtsen, Stein, St12} 
that is, optimzation problems of the form 
\begin{align} \label{eq:SIP}
\min_{x\in X} f(x) \quad \text{s.t.} \quad g_i(x,y) \le 0 \quad \text{for all } y \in Y \text{ and } i \in I
\end{align}
with a (strictly) convex objective function $f: X \to \R$ on some compact convex subset $X$ of a normed vector space and with infinitely many convex constraint functions $g_i(\cdot,y): X \to \R$ indexed by indices $i$ and $y$ from a finite set $I$ and, respectively, an infinite compact metric space $Y$. 
A simple but important application of semi-infinite optimization is shape-constrained regression~\cite{GrJo, CoSaMi15, KuSc20}, that is, the problem to find a model for an unknown functional relationship of interest such that, on the one hand, the model optimally fits the available experimental data and such that, on the other hand, the model strictly respects 
the available theoretical prior knowledge about the shape of the functional relationship to be modeled. Common examples of such shape knowledge are given by monotonicity or convexity knowledge which, expressed in terms of partial derivatives, takes the form of semi-infinite constraints as in~\eqref{eq:SIP}. 
\smallskip

What we contribute 
in this paper is two adaptive discretization algorithms for convex semi-infinite optimization problems of the form~\eqref{eq:SIP}, which compute an approximate solution of~\eqref{eq:SIP} of arbitrary precision in finitely many iterations and then terminate. In other words, for every given approximation error tolerance $\delta > 0$, our algorithms terminate at a $\delta$-approximate solution of~\eqref{eq:SIP} after finitely many iterations (the total number of iterations depending on $\delta$, of course). As usual, a $\delta$-approximate solution of~\eqref{eq:SIP} 
is a point $x^* \in X$ which is feasible for~\eqref{eq:SIP} and for which $f(x^*)$ exceeds the optimal value of~\eqref{eq:SIP} at most by $\delta$, for short:
\begin{align} \label{eq:delta-approx-sol-SIP}
x^* \in F_0(Y) \qquad \text{and} \qquad f(x^*) \le \min_{x\in F_0(Y)} f(x) + \delta,
\end{align}
where $F_0(Y)$ stands for the feasibility set of~\eqref{eq:SIP}. In virtue of the feasibility of approximate solutions, our algorithms in particular -- and in sharp contrast to the algorithm from~\cite{ZhWuLo10} -- belong to the class of finitely feasible algorithms. 
In recent years, a lot of such finitely feasible algorithms have been developed for general non-convex semi-infinite optimization problems \cite{Mi11, TsRu11, DjMi17, HaPaTr19} -- including generalized, mixed-integer, and coupling-equality- and existence-constrained semi-infinite optimization problems~\cite{Mi11, MiTs15, DjGlMi19, DjMi21}. 
\smallskip

Compared to these algorithms, the two algorithms developed here require considerably smaller discretizations and are thus computationally favorable. Indeed, in updating their discretizations from iteration to iteration, our algorithms do not only add new points but also drop old points from the previous discretization.  
Also, at least our first algorithm is conceptually 
simpler than the algorithms from \cite{Mi11, DjMi17}. 
And finally, our algorithms for convex semi-infinite optimization problems terminate at approximate solutions of arbitrary precision. In contrast, for general semi-infinite problems as treated in~\cite{Mi11, TsRu11, DjMi17}, the best possible approximate-solution precision can be arbitrarily bad. 
In fact, this can happen already for just quasi-convex constraint functions. 
\smallskip

We now describe our two algorithms in a bit more detail. As in~\cite{Mi11, MiTs15, DjMi17, DjGlMi19, DjMi21} 
the restrictions
\begin{align} \label{eq:SIP_-eps}
\min_{x\in X} f(x) \quad \text{s.t.} \quad g_i(x,y) \le -\eps \quad \text{for all } y \in Y \text{ and } i \in I
\end{align}
of~\eqref{eq:SIP} with some restriction parameter $\eps \in [0,\infty)$ play an important role here. At the heart of our algorithms is an adaptive discretization algorithm in the spirit of~\cite{BlFa76}, which for a given restriction parameter $\eps$ iteratively solves discretized versions of~\eqref{eq:SIP_-eps} up to small approximation errors. We will refer to this algorithm as the core algorithm in the following. As we will see, this core algorithm for given $\eps > 0$ terminates after finitely many iterations at a point $x_{-\eps}^* \in X$ which, on the one hand, is feasible for~\eqref{eq:SIP} and for which, on the other hand, $f(x_{-\eps}^*)$ exceeds the optimal value of the restriction~\eqref{eq:SIP_-eps} only slightly. In short, 
\begin{align} \label{eq:core-algo}
x_{-\eps}^* \in F_0(Y) \qquad \text{and} \qquad f(x_{-\eps}^*) \le \min_{x\in F_{-\eps}(Y)} f(x) + \delta^*
\end{align}  
for some small $\delta^* > 0$, where $F_{-\eps}(Y)$ denotes the feasibility set of the restricted optimization problem~\eqref{eq:SIP_-eps}. We will also see 
that the optimal value of the restricted problem~\eqref{eq:SIP_-eps}, under our convexity assumptions, converges at a certain rate to the optimal value of the original problem~\eqref{eq:SIP}, that is, 
\begin{align} \label{eq:convergence-of-opt-values}
\min_{x\in F_{-\eps}(Y)} f(x) \searrow \min_{x\in F_0(Y)} f(x)
\qquad (\eps \searrow 0). 
\end{align}
In view of~\eqref{eq:core-algo} and~\eqref{eq:convergence-of-opt-values}, it is not surprising that by applying the core algorithm in a suitable way with suitably decreased restriction parameters $\eps$, 
one can construct 
algorithms that finitely terminate at approximate solutions of the original problem~\eqref{eq:SIP} of arbitrary precision. 
\smallskip

In our first algorithm -- termed the sequential algorithm --  we sequentially apply the core algorithm again and again, each time with a smaller restriction parameter $\eps > 0$. In our second algorithm -- termed the simultaneous algorithm -- we follow~\cite{Mi11, DjMi17}, 
namely, we simultaneously apply the core algorithm for $\eps >0$ and the core algorithm for $\eps = 0$. Compared to the sequential algorithm, the decreasing mechanism for $\eps$ is more involved here, because it depends on quantities calculated in the course of the algorithm. Similarly, the termination index in the simultaneous algorithm depends on such a posteriori quantities, while in the sequential algorithm the termination index  depends only on input quantities and thus is known in advance. 
\smallskip

All finite optimization subproblems occurring in our two algorithms, that is, all discretized versions of~\eqref{eq:SIP_-eps} and all feasibility test problems
\begin{align} \label{eq:Aux_i(x*)}
\max_{y\in Y} g_i(x^*,y) 
\qquad (i \in I)
\end{align} 
for given points $x^* \in X$, have to be solved only approximately -- with decreasing approximation error tolerances. Concerning the regularity of the objective and constraint functions $f: X \to \R$ and $g_i: X\times Y \to \R$, no differentiability has to be assumed -- mere continuity is sufficient. And finally, the domains $X$ and $Y$ do not have to be embedded in finite-dimensional vector spaces -- it suffices when they are a compact convex subset of any normed vector space and, respectively, a compact metric space. In particular, our algorithms can be applied in the same way both to parametric and to non-parametric shape-constrained regression problems. In the particularly important case of polynomial regression under shape constraints, the feasibility test problems~\eqref{eq:Aux_i(x*)} are global (multivariate) polynomial optimization problems and, accordingly, tailor-made solvers like~\cite{HeLa02, HaJi03} can be used to approximately solve these problems.  
\smallskip 

In the entire paper, we will adopt 
the following notation and terminology. We will often write $\mathrm{SIP}_{-\eps}(Y^*)$ for the relaxation 
\begin{align} \label{eq:SIP_-eps(Y*)}
\min_{x\in X} f(x) \quad \text{s.t.} \quad g_i(x,y) \le -\eps \quad \text{for all } y \in Y^* \text{ and } i \in I
\end{align} 
of~\eqref{eq:SIP_-eps} with $Y^*$ being an arbitrary subset of $Y$ and we will write $\mathrm{Aux}_i(x^*)$ for the auxiliary, feasibility test problems~\eqref{eq:Aux_i(x*)} for the point $x^* \in X$. As above, 
\begin{align}
F_{-\eps}(Y^*) := \{x \in X: g_i(x,y) \le -\eps \text{ for all } y \in Y \text{ and } i \in I \}
\end{align}
denotes the feasibility set of~\eqref{eq:SIP_-eps(Y*)}. A $\delta$-approximate solution of~\eqref{eq:SIP_-eps(Y*)} is defined analogously to~\eqref{eq:delta-approx-sol-SIP} and a $\delta$-approximate solution of the maximization problem~\eqref{eq:Aux_i(x*)} is defined as a $\delta$-approximate solution of the corresponding minimization problem with objective function $-g_i(x^*,\cdot)$, of course. Also, $\omega_f: [0,\infty) \to \R$ denotes the modulus of continuity of the function $f: X \to \R$, that is,
\begin{align} \label{eq:modulus-of-continuity}
\omega_f(r) := \sup\{|f(a)-f(b)|: a,b \in X \text{ with } \norm{a-b} \le r\}
\qquad (r\in[0,\infty))
\end{align}
with $\norm{\cdot}$ being the norm of the normed space in which $X$ is embedded. And finally, monotonic increasing- or decreasingness is always understood in the non-strict sense (that is, with non-strict inequalities).  


\section{Some preliminaries}

We begin with an elementary solvability result, 
which will continually -- and mostly tacitly -- be used later on. 

\begin{cond} \label{cond:cont}
$f \in C(X,\R)$ and $g_i \in C(X \times Y,\R)$ for every $i \in I$, where $X \ne \emptyset$ and $Y \ne \emptyset$ are compact metric spaces 
and $I \ne \emptyset$ is a finite index set.
\end{cond}

\begin{cond} \label{cond:strictly-convex}
$f$ and $g_i$ are as in Condition~\ref{cond:cont}. Additionally, $X$ is a convex subset of a normed vector space (the metric of $X$ being induced by the vector norm), $f$ is strictly convex, and $g_i(\cdot,y)$ is convex for every $i \in I$ and $y \in Y$.  
\end{cond}

\begin{prop} \label{prop:ex-of-SIP-solutions}
Suppose that Condition~\ref{cond:cont} is satisfied. As soon as the optimization problem~\eqref{eq:SIP_-eps(Y*)} with some arbitrary subset $Y^*$ of $Y$ and some arbitrary $\eps \in [0,\infty)$ is feasible, it has a solution. 
If even Condition~\ref{cond:strictly-convex} is satisfied, this solution is unique. 
\end{prop}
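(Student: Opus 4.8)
The plan is to establish existence via the Weierstrass extreme value theorem and uniqueness via a standard strict-convexity argument.

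First I would show that the feasibility set $F_{-\eps}(Y^*)$ is a compact subset of $X$. Since $g_i \in C(X\times Y,\R)$, the partial map $g_i(\cdot,y): X \to \R$ is continuous for every fixed $y \in Y^*$ and $i \in I$, so the sublevel set $\{x \in X: g_i(x,y) \le -\eps\}$, being the preimage of the closed half-line $(-\infty,-\eps]$, is closed in $X$. The feasibility set is the intersection
\begin{align}
F_{-\eps}(Y^*) = \bigcap_{i \in I}\bigcap_{y\in Y^*} \{x \in X: g_i(x,y) \le -\eps\}
\end{align}
of such closed sets and is therefore closed, regardless of whether $Y^*$ is closed, since arbitrary intersections of closed sets are closed. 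As a closed subset of the compact space $X$, the set $F_{-\eps}(Y^*)$ is itself compact. By the feasibility hypothesis it is moreover nonempty, and since $f$ is continuous it attains its infimum on $F_{-\eps}(Y^*)$ by the Weierstrass theorem. This proves existence of a solution.

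For uniqueness under Condition~\ref{cond:strictly-convex}, I would first observe that $F_{-\eps}(Y^*)$ is convex: each sublevel set $\{x: g_i(x,y)\le -\eps\}$ is convex because $g_i(\cdot,y)$ is convex, and the intersection of convex sets is convex. Now suppose $x_1, x_2 \in F_{-\eps}(Y^*)$ are both solutions, so that $f(x_1)=f(x_2)=m$ equals the optimal value, and assume $x_1 \ne x_2$. Then the midpoint $\tfrac12(x_1+x_2)$ lies in $F_{-\eps}(Y^*)$ by convexity, while strict convexity of $f$ gives $f(\tfrac12(x_1+x_2)) < \tfrac12 f(x_1) + \tfrac12 f(x_2) = m$, contradicting the optimality of $m$. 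Hence the solution is unique.

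The argument is entirely standard, and I do not expect a genuine obstacle. The only point deserving a moment's care is that $Y^*$ is an arbitrary -- possibly non-closed -- subset of $Y$; this is harmless, however, since closedness of $F_{-\eps}(Y^*)$ requires only continuity of each $g_i$ in its first argument together with the fact that arbitrary (not merely finite) intersections of closed sets remain closed. Joint continuity of $g_i$ on $X\times Y$ is more than enough for this; compactness of $Y^*$ would become relevant only if one additionally wanted the feasibility-test problems~\eqref{eq:Aux_i(x*)} to be solvable, which is not part of the present statement.
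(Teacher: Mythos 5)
Your proof is correct and follows essentially the same route as the paper's: compactness of $F_{-\eps}(Y^*)$ plus continuity of $f$ (Weierstrass) for existence, and convexity of the feasible set plus strict convexity of $f$ for uniqueness. The paper merely asserts these facts in a single sentence, whereas you spell out the closedness-via-intersections and midpoint arguments -- including the correct observation that $Y^*$ need not be closed -- so the two proofs coincide in substance.
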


\begin{proof}
Since the feasibility set $F_{-\eps}(Y^*)$ of~\eqref{eq:SIP_-eps(Y*)} is a non-empty compact set by assumption, 
the optimization problem~\eqref{eq:SIP_-eps(Y*)} has a solution by the assumed continuity of $f$. In case even Condition~\ref{cond:strictly-convex} is satisfied, then $F_{-\eps}(Y^*)$ is also convex and therefore the problem~\eqref{eq:SIP_-eps(Y*)}, by the strict convexity of $f$, cannot have more than one solution. 
\end{proof}


We continue with a fundamental convergence result. It states that in the case of strict feasibility and of convex constraint functions, the optimal values of the restricted optimization problems~\eqref{eq:SIP_-eps} converge to the optimal value of the original optimization problem~\eqref{eq:SIP} at a rate determined by the modulus of continuity of $f$. 

\begin{prop} \label{prop:opt-values-f*_-eps-converge-to-f*}
Suppose that $f \in C(X,\R)$ and $g_i \in C(X\times Y,\R)$ for every $i \in I$, where $X \ne \emptyset$ is a convex compact subset of a normed vector space, $Y \ne \emptyset$ is a compact metric space and $I \ne \emptyset$ is a finite index set. Suppose further that~\eqref{eq:SIP} is strictly feasible, that is, 
\begin{align} \label{eq:SIP-strictly-feasible}
F_{0-}(Y) := \{x \in X: g_i(x,y) < 0 \text{ for all } y \in Y \text{ and } i \in I\} \ne \emptyset
\end{align}
and that $g_i(\cdot,y)$ is convex for every $i \in I$ and $y \in Y$. Then  
for every $\eps^* > 0$ with $F_{-\eps^*}(Y) \ne \emptyset$ the estimate
\begin{align} \label{eq:upper-bds-converge-to-f*-convergence-rate}
\min_{x \in F_{-\eps}(Y)} f(x) \le \min_{x \in F_0(Y)} f(x) + \omega_f(\diam X \cdot \eps/\eps^*) 
\qquad (\eps \in (0,\eps^*])
\end{align}
holds true, where $\omega_f$ is the modulus of continuity of $f$ as defined in~\eqref{eq:modulus-of-continuity}. In particular, 
\begin{align} \label{eq:upper-bds-converge-to-f*}
\min_{x \in F_{-\eps}(Y)} f(x) \searrow \min_{x\in F_0(Y)} f(x) \qquad (\eps \searrow 0). 
\end{align}
\end{prop}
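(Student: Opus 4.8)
The plan is to exploit convexity via a direct interpolation between an optimal point of~\eqref{eq:SIP} and any point witnessing the nonemptiness of $F_{-\eps^*}(Y)$. First I would use strict feasibility to ensure that the relevant minima exist and are taken over nonempty sets: since $F_{0-}(Y) \subseteq F_0(Y)$, the set $F_0(Y)$ is nonempty, so Proposition~\ref{prop:ex-of-SIP-solutions} furnishes a minimizer $x_0 \in F_0(Y)$ with $f(x_0) = \min_{x\in F_0(Y)} f(x)$; by the hypothesis $F_{-\eps^*}(Y) \ne \emptyset$ I may also fix some point $x_{\eps^*} \in F_{-\eps^*}(Y)$, so that $g_i(x_{\eps^*},y) \le -\eps^*$ for all $y\in Y$ and $i \in I$.

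Next, for a given $\eps \in (0,\eps^*]$ I would set $\lambda := \eps/\eps^* \in (0,1]$ and consider the convex combination $x_\lambda := (1-\lambda)x_0 + \lambda x_{\eps^*}$, which belongs to $X$ by convexity of $X$. The key computation is to verify that $x_\lambda$ is feasible for the restricted problem at level $-\eps$: using the convexity of $g_i(\cdot,y)$ together with $g_i(x_0,y) \le 0$ and $g_i(x_{\eps^*},y) \le -\eps^*$, one obtains
\begin{align*}
g_i(x_\lambda,y) \le (1-\lambda)\,g_i(x_0,y) + \lambda\,g_i(x_{\eps^*},y) \le \lambda\cdot(-\eps^*) = -\eps
\end{align*}
for all $y\in Y$ and $i\in I$, whence $x_\lambda \in F_{-\eps}(Y)$.

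It then remains to control $f(x_\lambda)$. Since $\norm{x_\lambda - x_0} = \lambda\,\norm{x_{\eps^*}-x_0} \le \lambda\,\diam X = \diam X\cdot\eps/\eps^*$, the definition of the modulus of continuity and its monotonicity give $f(x_\lambda) \le f(x_0) + \omega_f(\diam X\cdot\eps/\eps^*)$. Combining this with the trivial bound $\min_{x\in F_{-\eps}(Y)} f(x) \le f(x_\lambda)$ and with $f(x_0) = \min_{x\in F_0(Y)} f(x)$ yields precisely the estimate~\eqref{eq:upper-bds-converge-to-f*-convergence-rate}.

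Finally, for the convergence statement~\eqref{eq:upper-bds-converge-to-f*} I would argue by monotonicity and squeezing. The map $\eps \mapsto \min_{x\in F_{-\eps}(Y)} f(x)$ is nondecreasing because the feasibility sets are nested ($F_{-\eps}(Y) \subseteq F_{-\eps'}(Y)$ whenever $\eps \ge \eps'$), and it is bounded below by $\min_{x\in F_0(Y)} f(x)$ since $F_{-\eps}(Y) \subseteq F_0(Y)$ for $\eps \ge 0$. As $f$ is uniformly continuous on the compact set $X$, we have $\omega_f(r)\to 0$ as $r\searrow 0$, so the upper bound in~\eqref{eq:upper-bds-converge-to-f*-convergence-rate} tends to $\min_{x\in F_0(Y)} f(x)$ as $\eps\searrow 0$, and the claimed monotone convergence follows. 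I do not expect a genuine obstacle here; the only delicate point is the choice $\lambda = \eps/\eps^*$, which is exactly what makes the restricted constraint level come out as $-\eps$ while keeping the displacement from $x_0$ proportional to $\eps$, so that the modulus of continuity can be applied at the scale $\diam X\cdot\eps/\eps^*$.
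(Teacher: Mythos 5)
Your proof is correct and follows essentially the same route as the paper's: the identical convex combination $x_\lambda = (1-\lambda)x_0 + \lambda x_{\eps^*}$ with $\lambda = \eps/\eps^*$, the same feasibility computation, the same modulus-of-continuity bound via $\norm{x_\lambda - x_0} \le \lambda \diam X$, and the same monotone-squeeze conclusion. The only point you gloss over is that for the convergence statement~\eqref{eq:upper-bds-converge-to-f*} to be non-vacuous you need some $\eps^* > 0$ with $F_{-\eps^*}(Y) \ne \emptyset$ to actually exist; this is where strict feasibility is used beyond guaranteeing $F_0(Y) \ne \emptyset$ --- for $x \in F_{0-}(Y)$, compactness of $Y$ and finiteness of $I$ give $\max_{i\in I}\max_{y\in Y} g_i(x,y) < 0$, hence $x \in F_{-\eps^*}(Y)$ for $\eps^* := -\max_{i\in I}\max_{y\in Y} g_i(x,y)$, which is exactly the relation $F_{0-}(Y) = \bigcup_{\eps > 0} F_{-\eps}(Y)$ that the paper establishes as its first step.
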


\begin{proof}
As a first step, we observe that $F_{-\eps}(Y) \nearrow F_{0-}(Y)$ as $\eps \searrow 0$, that is, 
\begin{align} \label{eq:opt-values-f*_-eps-converge-to-f*,step-1}
F_{-\eps_1}(Y) \subset F_{-\eps_2}(Y) \qquad (\eps_1 \ge \eps_2) \qquad \text{and} \qquad
F_{0-}(Y) = \bigcup_{\eps > 0} F_{-\eps}(Y).
\end{align}
Indeed, the relation~(\ref{eq:opt-values-f*_-eps-converge-to-f*,step-1}.a) is trivial and the relation~(\ref{eq:opt-values-f*_-eps-converge-to-f*,step-1}.b) immediately follows from the continuity of the functions $g_i(x,\cdot)$ for $x \in X$ and the compactness of $Y$. 
\smallskip

As a second step, we show that for every $\eps^* > 0$ with $F_{-\eps^*}(Y) \ne \emptyset$ the asserted estimate~\eqref{eq:upper-bds-converge-to-f*-convergence-rate} holds true. 
So, let $\eps^* > 0$ be such that $F_{-\eps^*}(Y) \ne \emptyset$. Choose and fix an arbitrary element $x_{-\eps^*} \in F_{-\eps^*}(Y)$ and let  $x_0$ be an arbitrary solution of~\eqref{eq:SIP} (Proposition~\ref{prop:ex-of-SIP-solutions}), that is, 
\begin{align} \label{eq:opt-values-f*_-eps-converge-to-f*,step-2,1}
x_0 \in F_0(Y) \qquad \text{and} \qquad f(x_0) = \min_{x\in F_0(Y)}f(x).
\end{align}
Also, let $x_{-\eps} := (\eps/\eps^*) \, x_{-\eps^*} + (1-\eps/\eps^*) \, x_0$. 
Since $X$ and the functions $g_i(\cdot,y)$ are convex by assumption, we see for every $\eps \in (0,\eps^*]$ that $x_{-\eps} \in X$ and that
\begin{align}
g_i(x_{-\eps},y) \le (\eps/\eps^*) \, g_i(x_{-\eps^*},y) + (1-\eps/\eps^*) \, g_i(x_0,y) \le -\eps 
\qquad (y \in Y \text{ and } i \in I).
\end{align} 
Consequently, 
\begin{align} \label{eq:opt-values-f*_-eps-converge-to-f*,step-2,3}
x_{-\eps} \in F_{-\eps}(Y) \qquad (\eps \in (0,\eps^*]).
\end{align} 
Additionally, we trivially see from the definition of $x_{-\eps}$ that 
\begin{align} \label{eq:opt-values-f*_-eps-converge-to-f*,step-2,4}
\norm{x_{-\eps}-x_0} = (\eps/\eps^*) \, \norm{x_{-\eps^*}-x_0} \le \diam F_0(Y) \cdot \eps/\eps^* 
\qquad (\eps \in (0,\eps^*]). 
\end{align}
Combining now~\eqref{eq:opt-values-f*_-eps-converge-to-f*,step-2,1}, \eqref{eq:opt-values-f*_-eps-converge-to-f*,step-2,3} and~\eqref{eq:opt-values-f*_-eps-converge-to-f*,step-2,4}, we obtain
\begin{align}
\min_{x\in F_0(Y)} f(x) = f(x_0) 
&= f(x_{-\eps}) - (f(x_{-\eps})-f(x_0)) \notag\\
&\ge \min_{x\in F_{-\eps}(Y)} f(x) - \omega_f(\diam F_0(Y) \cdot \eps/\eps^*)
\qquad (\eps \in (0,\eps^*])
\end{align}
which immediately implies the desired estimate~\eqref{eq:upper-bds-converge-to-f*-convergence-rate}. 
\smallskip

As a third step, we conclude the asserted monotonic convergence~\eqref{eq:upper-bds-converge-to-f*}. 
Indeed, let $\eps^*$ be any positive number for which $F_{-\eps^*}(Y) \ne \emptyset$. In view of the assumed strict feasibility~\eqref{eq:SIP-strictly-feasible} and of~(\ref{eq:opt-values-f*_-eps-converge-to-f*,step-1}.b) such an $\eps^*$ does exist. We then see 
by~(\ref{eq:opt-values-f*_-eps-converge-to-f*,step-1}.a) and~\eqref{eq:upper-bds-converge-to-f*-convergence-rate} that
\begin{align} \label{eq:opt-values-f*_-eps-converge-to-f*,step-3,1}
\min_{x\in F_0(Y)}f(x) \le \min_{x\in F_{-\eps}(Y)}f(x) \le \min_{x\in F_0(Y)}f(x) + \omega_f(d \cdot \eps/\eps^*)
\qquad (\eps \in (0,\eps^*]),
\end{align} 
where $d := \diam X$. Consequently, $\min_{x\in F_{-\eps}(Y)}f(x) \longrightarrow \min_{x\in F_0(Y)}f(x)$ as $\eps \searrow 0$ by virtue of the uniform continuity of $f$. 
Additionally, by~(\ref{eq:opt-values-f*_-eps-converge-to-f*,step-1}.a) this convergence is actually monotonically decreasing. In other words, \eqref{eq:upper-bds-converge-to-f*} is satisfied, as desired. 
\end{proof}

\begin{cor}
Suppose that the assumptions of the previous proposition are satisfied. If, in addition, $f$ is even Lipschitz constinuous, then the rate of convergence in~\eqref{eq:upper-bds-converge-to-f*} is linear. 
\end{cor}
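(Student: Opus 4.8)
The plan is to combine the quantitative estimate~\eqref{eq:upper-bds-converge-to-f*-convergence-rate} from the previous proposition with the elementary fact that Lipschitz continuity forces the modulus of continuity of $f$ to grow at most linearly. First, I would recall that if $f$ is Lipschitz continuous with some Lipschitz constant $L \in [0,\infty)$, then $|f(a)-f(b)| \le L\norm{a-b}$ for all $a,b \in X$, so that by the very definition~\eqref{eq:modulus-of-continuity} of the modulus of continuity one has $\omega_f(r) \le L r$ for every $r \in [0,\infty)$.

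Next, I would pick some $\eps^* > 0$ with $F_{-\eps^*}(Y) \ne \emptyset$ -- such an $\eps^*$ exists by the assumed strict feasibility, as already established in the third step of the proof of the previous proposition -- and then insert the linear bound $\omega_f(r) \le Lr$ into the estimate~\eqref{eq:upper-bds-converge-to-f*-convergence-rate}. Together with the trivial lower bound coming from the monotonicity relation~(\ref{eq:opt-values-f*_-eps-converge-to-f*,step-1}.a), this gives, for all $\eps \in (0,\eps^*]$,
\begin{align}
0 \le \min_{x\in F_{-\eps}(Y)} f(x) - \min_{x\in F_0(Y)} f(x) \le \omega_f(\diam X \cdot \eps/\eps^*) \le \frac{L \cdot \diam X}{\eps^*}\, \eps.
\end{align}
Since the right-hand side is a constant multiple of $\eps$, this is exactly the statement that the convergence in~\eqref{eq:upper-bds-converge-to-f*} happens at a linear rate.

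I do not anticipate any real obstacle, as the argument amounts to a one-line substitution of the bound $\omega_f(r) \le Lr$ into an already-proved estimate. The only point deserving a brief remark is that the resulting constant $L \cdot \diam X / \eps^*$ is finite; this is guaranteed by the compactness of $X$, which ensures $\diam X < \infty$.
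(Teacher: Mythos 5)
Your proposal is correct and follows exactly the paper's own argument: the paper likewise proves the corollary by substituting the bound $\omega_f(r) \le L^* r$ (valid for any Lipschitz constant $L^*$ of $f$) into the estimate~\eqref{eq:upper-bds-converge-to-f*-convergence-rate}. Your additional remarks on the existence of $\eps^*$ and the finiteness of $\diam X$ simply spell out details the paper leaves implicit.
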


\begin{proof}
An immediate consequence of~\eqref{eq:upper-bds-converge-to-f*-convergence-rate} and the fact that
\begin{align}
\omega_f(r) \le L^* r \qquad (r \in [0,\infty))
\end{align}
for every Lipschitz constant $L^*$ of $f$. 
\end{proof}

We point out that the convexity assumption on the constraint functions $g_i(\cdot,y)$ cannot be dropped from the above results. It cannot even be weakened to quasi-convexity because for quasi-convex $g_i(\cdot,y)$, the gap
\begin{align}
\inf_{\eps\in(0,\infty)} \min_{x\in F_{-\eps}(Y)} f(x) - \min_{x\in F_0(Y)} f(x)
\end{align} 
can already be arbitrarily large. See the example below. 
%
Also, the linear convergence rate 
from the above corollary cannot be improved to some higher-order convergence rate, in general. Indeed, this easily follows with the help affine functions $f$, $g_i$.

\begin{ex}
Set $X := [-2,2]$ and $Y := [0,1]$ and let $c > 0$ be an arbitrary positive number. Also, let $f \in C(X,\R)$ be such that the minimum of $f$ is attained outside $[-1,1]$, more precisely,  
\begin{align}
\min_{x\in X}f(x) + c \le \min_{x\in [-1,1]} f(x), 
\end{align}
and let $g \in C(X\times Y, \R)$ be defined by
\begin{align}
g(x,y) := g_0(x) := \begin{cases}
x^2-1, \quad x \in [-1,1]\\
0, \quad x \in X\setminus [-1,1]
\end{cases} 
\qquad ((x,y) \in X\times Y).
\end{align}
It is then clear that Condition~\ref{cond:cont} is satisfied and that~\eqref{eq:SIP} is strictly feasible. Also, $X$ is convex and $g(\cdot,y)$ is quasi-convex for every $y \in Y$ (that is, all sublevel sets of $g(\cdot,y)$ are convex). Yet, the strict feasibility set $F_{0-}(Y)$ is much smaller than the feasibility set $F_0(Y)$, namely
\begin{align}
F_{0-}(Y) = (-1,1) \qquad \text{while} \qquad F_0(Y) = X.
\end{align} 
Consequently, we have
\begin{align}
\min_{x\in F_{-\eps}(Y)} f(x) \ge \inf_{x \in F_{0-}(Y)} f(x) = \inf_{x \in (-1,1)} f(x) \ge \min_{x\in X}f(x) + c = \min_{x\in F_0(Y)} f(x) + c
\end{align}
for every $\eps > 0$. And therefore the convergence~\eqref{eq:upper-bds-converge-to-f*} cannot hold true. $\blacktriangleleft$
\end{ex}

\section{Convergence and termination of the core algorithm}

In this section, we introduce the core algorithm and establish corresponding convergence and termination results that are at the heart of our finite-termination results for both the sequential and the simultaneous algorithm.

\subsection{Core algorithm}

As has already been mentioned in the introduction, the core algorithm (Algorithm~\ref{algo:-eps}) is inspired by the well-known and classic algorithms from~\cite{BlFa76}. In contrast to~\cite{BlFa76} however, the core algorithm requires only approximate solutions of all arising finite optimization subproblems. And, moreover, just like~\cite{Mi11} the core algorithm features discretized versions of the restrictions~\eqref{eq:SIP_-eps} of~\eqref{eq:SIP} with general restriction parameters $\eps \in [0,\infty)$, whereas $\eps = 0$ in~\cite{BlFa76}.

\begin{algo} \label{algo:-eps} 
Input: sequences $(\ol{\delta}_{k})$, $(\ul{\delta}_{k,i})$ in $[0,\infty)$ for $i \in I$, $\rho \in [0,\infty) \cup \{\infty\}$, a finite subset $Y^0$ of $Y$. With these inputs at hand, perform the following steps: 
\begin{enumerate}
\item  Set $k = 0$

\item Compute a $\ol{\delta}_k$-approximate solution $x^k$ of $\mathrm{SIP}_{-\eps}(Y^k)$

\item Compute a $\ul{\delta}_{k, i}$-approximate solution of $\mathrm{Aux}_i(x^k)$ for every $i\in I$

\item Check the sign of $g_i(x^k,y^{k,i}) + \ul{\delta}_{k,i}$ for $i\in I$
\begin{itemize}
\item If $g_i(x^k,y^{k,i}) > -\ul{\delta}_{k, i}$ for some $i \in I$, then define the new discretization by 
\begin{align} \label{eq:def-Y^k+1} 
Y^{k+1} := Y^{k+1}_1 \cup Y^{k+1}_2
\end{align}
and return to Step~2 with $k$ replaced by $k+1$. In this updating rule, the component $Y^{k+1}_1$ is chosen as 
\begin{align} \label{eq:def-Y^k+1_1} 
Y^{k+1}_1 := \{ y \in Y^k: g_i(x^k,y) \ge -\eps - \rho \text{ for some } i \in I \} 
\end{align}
and the component $Y^{k+1}_2$ is chosen as a finite subset of $Y$ that  contains a strongest violator $y^{k,i_k}$, that is, 
\begin{align} \label{eq:def-Y^k+1_2} 
\{y^{k,i_k}\} \subset Y^{k+1}_2 \subset Y \qquad \text{and} \qquad |Y^{k+1}_2| < \infty,
\end{align}
where $i_k \in I$ is an index with $g_{i_k}(x^k,y^{k,i_k}) = \max_{i\in I}g_i(x^k,y^{k,i})$. 

\item If $g_i(x^k,y^{k,i}) \le -\ul{\delta}_{k,i}$ for all $i \in I$, then terminate.
\end{itemize}
\end{enumerate}
\end{algo}

With the choice of the parameter $\rho$, one can regulate the size of the discretizations $Y^{k+1}$ generated by the algorithm and thus its  computational time. If $\rho = \infty$, 
then the discretizations get larger and larger from iteration to iteration, that is,
\begin{align} \label{eq:successive-refinement-simple-algo}
Y^k \subset Y^{k+1} 
\end{align} 
for all iteration indices $k$. If, however $\rho \in [0,\infty)$, then $Y^{k+1}_1$ will, in general, contain only some few elements of $Y^k$ and thus the 
inclusions~\eqref{eq:successive-refinement-simple-algo} will no longer be true, in general. Choosing $\rho = 0$ and $Y^{k+1}_2 = \{y^{k+1,i_k}\}$, one obtains the smallest possible new discretizations, namely
\begin{align}
Y^{k+1} = \{ y \in Y^k: g_i(x^k,y) = -\eps \text{ for some } i \in I \} \cup  \{y^{k,i_k}\}.
\end{align}
%
Algorithm~\ref{algo:-eps} essentially reduces to the algorithms from~\cite{BlFa76} in the special case 
\begin{align} \label{eq:BlFa76-spec-case}
\ol{\delta}_k, \ul{\delta}_{k,i} = 0 
\qquad \text{and} \qquad 
\eps = 0
\qquad \text{and} \qquad 
Y_2^{k+1} = \{y^{k,i_k}\}
\end{align}
where the approximate-solution tolerances $\ol{\delta}_k$ and $\ul{\delta}_{k,i}$ and the restriction parameter $\eps$ are exactly zero for all iteration indices $k$ and all $i \in I$ and where the component $Y^{k+1}_2$ solely consists of a strongest violator $y^{k,i_k}$. 
In fact, it reduces to the simple algorithm from~\cite{BlFa76} (Section~2.1) in the case~\eqref{eq:BlFa76-spec-case} with $\rho = \infty$, and to the refined algorithm from~\cite{BlFa76} (Section~2.2) in the case~\eqref{eq:BlFa76-spec-case} with $\rho = 0$.

\subsection{Convergence and termination results}

We now move on to establish central convergence and termination results for the core algorithm. Apart from the convexity and continuity assumptions made in Condition~\ref{cond:strictly-convex}, we will also need that the approximate-solution tolerances $\ol{\delta}_k$ and $\ul{\delta}_{k,i}$ converge to $0$ as $k\to\infty$, and sufficiently fast in the former case.

\begin{cond} \label{cond:params}
$\ol{\delta}_k, \ul{\delta}_{k, i} \in [0,\infty)$ and $\rho \in [0,\infty) \cup \{\infty\}$ such that $\ul{\delta}_{k, i} \longrightarrow 0$ as $k \to \infty$ for every $i \in I$ and such that 
one of the following two conditions is satisfied:
\begin{multicols}{2}
\begin{itemize}
\item[(i)] $(\ol{\delta}_k)$ is eventually $0$
\item[(ii)] $(\ol{\delta}_k)$ is summable and $\rho \ne 0$.
\end{itemize}
\end{multicols}
\end{cond}

As a first important lemma, we show that by applying the objective function $f$ to the sequences generated by the core algorithm one essentially obtains monotonically increasing sequences. In proving this, the convexity of $f$ and of the constraint functions $g_i(\cdot,y)$ is essential (except for the case $\rho = \infty$ where the increasingness is trivial by~\eqref{eq:successive-refinement-simple-algo}).

\begin{lm} \label{lm:f(x^k)-mon-increasing}
Suppose that Condition~\ref{cond:strictly-convex} and \ref{cond:params} are satisfied. Suppose further that $(x^k)$ is generated by Algorithm~\ref{algo:-eps} with some $\eps \in [0,\infty)$ and that $(x^k)$ is non-terminating. Then there exists a constant $C \in [0,\infty)$ such that
\begin{align} \label{eq:f(x^k)-mon-increasing}
f(x^k) \le f(\lambda x^k + (1-\lambda) x^{k+1}) + C \ol{\delta}_k 
\qquad (\lambda \in [0,1] \text{ and } k \in \N_0).
\end{align}
In particular, there exists a null sequence $(\nu_k)$ in $\R$ such that the sequence $(f(x^k)+\nu_k)$ is monotonically increasing. 
\end{lm}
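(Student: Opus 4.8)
The plan is to prove the displayed inequality~\eqref{eq:f(x^k)-mon-increasing} and then obtain the ``in particular'' statement by a summation argument. Throughout, write $z_\lambda := \lambda x^k + (1-\lambda) x^{k+1} \in X$ and $\phi(\lambda) := f(z_\lambda)$, so that $\phi \colon [0,1] \to \R$ is convex by the convexity of $f$. The target inequality reads $\phi(1) \le \phi(\lambda) + C\ol{\delta}_k$, that is, $\phi(\lambda) \ge \phi(1) - C\ol{\delta}_k$ for all $\lambda \in [0,1]$. Since $x^k$ is a $\ol{\delta}_k$-approximate solution of $\mathrm{SIP}_{-\eps}(Y^k)$, it minimizes $f$ over $F_{-\eps}(Y^k)$ up to $\ol{\delta}_k$; hence for every $z \in F_{-\eps}(Y^k)$ one has $f(x^k) \le f(z) + \ol{\delta}_k$. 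The whole difficulty is thus to locate enough points $z_\lambda$ inside $F_{-\eps}(Y^k)$.

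First I would record the behaviour on the retained part of the discretization. For $y \in Y^{k+1}_1 \subset Y^{k+1}$ both endpoints are feasible, namely $g_i(x^k,y) \le -\eps$ (as $x^k \in F_{-\eps}(Y^k)$) and $g_i(x^{k+1},y) \le -\eps$ (as $x^{k+1} \in F_{-\eps}(Y^{k+1})$), so by convexity of $g_i(\cdot,y)$ one gets $g_i(z_\lambda,y) \le -\eps$ for every $\lambda \in [0,1]$ and every $i \in I$. Consequently, the only constraints that can fail along the segment are those indexed by the dropped points $y \in Y^k \setminus Y^{k+1}_1$, for which, by the very definition~\eqref{eq:def-Y^k+1_1} of $Y^{k+1}_1$, one has the strict margin $g_i(x^k,y) < -\eps - \rho$ for all $i \in I$, while $g_i(x^{k+1},y)$ is entirely uncontrolled. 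This is the main obstacle: $x^{k+1}$ itself (the endpoint $\lambda=0$) need not be feasible on $Y^k$, so one cannot simply invoke optimality of $x^k$ for the whole segment.

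To overcome this I would distinguish the two regimes of Condition~\ref{cond:params}. If $\rho > 0$ (covering case~(ii) and case~(i) with $\rho>0$), put $G := \max_{i\in I}\max_{(x,y)\in X\times Y} g_i(x,y) < \infty$; then at a dropped point $g_i(z_\lambda,y) \le \lambda(-\eps-\rho) + (1-\lambda)G$, which is $\le -\eps$ precisely for $\lambda \ge \lambda_0 := (G+\eps)/(G+\eps+\rho) < 1$. Hence $z_\lambda \in F_{-\eps}(Y^k)$ for all $\lambda \in [\lambda_0,1]$, so $\phi(\lambda) \ge \phi(1) - \ol{\delta}_k$ there; a three-slopes convexity estimate relating $\phi(\lambda)$, $\phi(\lambda_0)$ and $\phi(1)$ then propagates this lower bound to all $\lambda \in [0,1]$ with the $k$-independent constant $C := 1/(1-\lambda_0) = (G+\eps+\rho)/\rho$. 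If $\rho = 0$, then Condition~\ref{cond:params} forces $(\ol{\delta}_k)$ to be eventually $0$; for every index $k$ with $\ol{\delta}_k = 0$ the point $x^k$ is an exact minimizer and the dropped-point margin is merely strict ($g_i(x^k,y) < -\eps$), so by continuity $z_\lambda$ stays feasible on $Y^k$ for $\lambda$ in a left-neighbourhood $[\lambda_1,1]$ of $1$, giving $\phi(\lambda) \ge \phi(1)$ there, and convexity of $\phi$ upgrades this to $\phi(\lambda) \ge \phi(1)$ for all $\lambda \in [0,1]$; for the finitely many remaining indices $k$ (those with $\ol{\delta}_k > 0$) the crude bound $\phi(1) - \phi(\lambda) \le \omega_f(\diam X)$ lets me absorb everything into a sufficiently large constant. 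Taking the maximum of the finitely many constants produced yields a single $C$ valid for all $k$.

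Finally, for the ``in particular'' claim I would specialize~\eqref{eq:f(x^k)-mon-increasing} to $\lambda = 0$, giving $f(x^{k+1}) \ge f(x^k) - C\ol{\delta}_k$. In both cases of Condition~\ref{cond:params} the sequence $(\ol{\delta}_k)$ is summable, so $\nu_k := -C\sum_{j=k}^{\infty}\ol{\delta}_j$ is well defined, satisfies $\nu_k \to 0$ (tail of a convergent series) and $\nu_{k+1} - \nu_k = C\ol{\delta}_k$; combining the last identity with $f(x^{k+1}) \ge f(x^k) - C\ol{\delta}_k$ gives $f(x^{k+1}) + \nu_{k+1} \ge f(x^k) + \nu_k$, that is, $(f(x^k)+\nu_k)$ is monotonically increasing. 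The crux of the whole argument, as noted, is the missing control on $x^{k+1}$ at the dropped points; the device that resolves it is to establish feasibility of $z_\lambda$ only for $\lambda$ close to $1$ and to let the convexity of $\phi$ carry the bound all the way down to $\lambda = 0$.
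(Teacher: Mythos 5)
Your proof is correct and follows essentially the same route as the paper's: establish feasibility of the segment points $\lambda x^k+(1-\lambda)x^{k+1}$ in $F_{-\eps}(Y^k)$ for $\lambda$ near $1$ (splitting $Y^k$ into retained and dropped points and exploiting convexity of the $g_i(\cdot,y)$), then propagate the approximate-optimality bound to all $\lambda\in[0,1]$ via the secant-slope inequality for the convex function $\lambda\mapsto f(\lambda x^k+(1-\lambda)x^{k+1})$, and finally build the null sequence $(\nu_k)$ from tail sums of the summable sequence $(\ol{\delta}_k)$. The only deviation is minor: in the case $\rho=0$ the paper constructs an explicit $k$-dependent threshold $\lambda_k^*$ from the strict margin at dropped points and takes a maximum of finitely many constants, whereas you argue qualitatively for the indices with $\ol{\delta}_k=0$ (where the constant is irrelevant) and absorb the finitely many indices with $\ol{\delta}_k>0$ into the constant via the crude bound $\omega_f(\diam X)/\ol{\delta}_k$; both are valid.
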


\begin{proof}
As a first step, we show that for every $k \in \N_0$ there exists a $\lambda_k^* \in [1/2,1)$ such that
\begin{align} \label{eq:f(x^k)-increasing-step1}
\lambda x^k + (1-\lambda) x^{k+1} \in F_{-\eps}(Y^k) 
\qquad (\lambda \in [\lambda_k^*,1]). 
\end{align}
In view of the convexity of $X$ and the convexity of the functions $g_i(\cdot,y)$ for $y \in Y$ and $i \in I$, proving~\eqref{eq:f(x^k)-increasing-step1}  
boils down to showing that, for some $\lambda_k^* \in [1/2,1)$, the estimate
\begin{align} \label{eq:f(x^k)-increasing-step1,1}
\lambda g_i(x^k,y) + (1-\lambda) g_i(x^{k+1},y) \le -\eps \qquad (y \in Y^k, i \in I \text{ and } \lambda \in [\lambda_k^*,1])
\end{align}
holds true. We have to define $\lambda_k^*$ in different ways depending on the value of $\rho$. 
Write
\begin{align*}
\eta := -\eps - \rho 
\qquad \text{and} \qquad
\ol{g}(x,y) := \max_{i\in I}g_i(x,y)
\end{align*} 
for brevity. In case $\rho = \infty$ we take 
$\lambda_k^* := \lambda^* := 1/2 \in [1/2,1)$. In case $\rho \in (0,\infty)$ we take 
$\lambda_k^* := \lambda^* \in [1/2,1)$ to be so close to $1$ that
\begin{align} \label{eq:f(x^k)-increasing-step1,lambda_k^*-rho-in-(0,infty)}
(1-\lambda^*)(|\eta| + \norm{\ol{g}}_{\infty}) \le \rho.
\end{align}
And in case $\rho = 0$ we take $\lambda_k^* \in [1/2,1)$ to be so close to $1$ that 
\begin{align} \label{eq:f(x^k)-increasing-step1,lambda_k^*-rho=0}
(1-\lambda_k^*) (|\eta| + \norm{\ol{g}}_{\infty}) \le \min\{ 1/2 \, (\eta-g_i(x^k,y)): y \in Y^k\setminus Y^{k+1}_1 \text{ and } i \in I\}
\end{align}
(which is possible because the minimum on the right-hand side is strictly positive by the definition of $Y^{k+1}_1$ and the finiteness of $Y^k$ and $I$).  
With these definitions of $\lambda_k^*$, we can now establish~\eqref{eq:f(x^k)-increasing-step1,1}. 
In the case where $\rho = \infty$, we have $Y^k = Y^{k+1}_1 \subset Y^{k+1}$. Consequently, $g_i(x^k,y) \le -\eps$ and $g_i(x^{k+1},y) \le -\eps$ for every $y \in Y^k$ and $i \in I$ by the approximate solution properties of $x^k$ and $x^{k+1}$, respectively, and therefore
\begin{align} \label{eq:f(x^k)-increasing-step1,2}
\lambda g_i(x^k,y) + (1-\lambda) g_i(x^{k+1},y) \le -\eps \qquad (y \in Y^k, i \in I \text{ and } \lambda \in [0,1]). 
\end{align}
In particular, this proves~\eqref{eq:f(x^k)-increasing-step1,1} in the case $\rho = \infty$. 
In the case where $\rho \in [0,\infty)$, we prove~\eqref{eq:f(x^k)-increasing-step1,1} first for $y \in Y^k \cap Y^{k+1}_1$ and then for $y \in Y^k \setminus Y^{k+1}_1$. 
If $y \in Y^k \cap Y^{k+1}_1$, then $y \in Y^k \cap Y^{k+1}$ and therefore
\begin{align} \label{eq:f(x^k)-increasing-step1,3}
\lambda g_i(x^k,y) + (1-\lambda) g_i(x^{k+1},y) \le -\eps \qquad (i \in I \text{ and } \lambda \in [0,1]) 
\end{align}
by the same reasoning as for~\eqref{eq:f(x^k)-increasing-step1,2}. 
If $y \in Y^k \setminus Y^{k+1}_1$, then 
\begin{align} \label{eq:f(x^k)-increasing-step1,4}
\lambda g_i(x^k,y) + (1-\lambda) g_i(x^{k+1},y) 
&= \eta + (1-\lambda)(|\eta| + g_i(x^{k+1},y)) - \lambda(\eta-g_i(x^k,y)) \notag\\
&\le -\eps -\rho + (1-\lambda_k^*)(|\eta| + \norm{\ol{g}}_{\infty}) - 1/2 \, (\eta-g_i(x^k,y)) \notag \\
&\le -\eps \qquad (i \in I \text{ and } \lambda \in [\lambda_k^*,1]) 
\end{align}
by virtue of~\eqref{eq:f(x^k)-increasing-step1,lambda_k^*-rho-in-(0,infty)} or~\eqref{eq:f(x^k)-increasing-step1,lambda_k^*-rho=0}, respectively. In particular, \eqref{eq:f(x^k)-increasing-step1,3} and~\eqref{eq:f(x^k)-increasing-step1,4} together prove~\eqref{eq:f(x^k)-increasing-step1,1} in the case $\rho \in [0,\infty)$, as desired. 
\smallskip

As a second step, we show that for every $k \in \N_0$ the estimate
\begin{align} \label{eq:f(x^k)-increasing-step2}
f(x^k) \le f(\lambda x^k + (1-\lambda)x^{k+1}) + \frac{\ol{\delta}_k}{1-\lambda_k^*} 
\qquad (\lambda \in [0,1])
\end{align} 
holds true. 
In order to do so, we consider the function $\phi_k: [0,1] \to \R$ defined by $\phi_k(\lambda) := f(\lambda x^k + (1-\lambda)x^{k+1})$ for $\lambda \in [0,1]$. Since $x^k$ is a $\ol{\delta}_k$-approximate solution of $\mathrm{SIP}_{-\eps}(Y^k)$, it follows by the first step~\eqref{eq:f(x^k)-increasing-step1} that
\begin{align} \label{eq:f(x^k)-increasing-step2,1}
\phi_k(1) = f(x^k) \le \min_{x \in F_{-\eps}(Y^k)} f(x) + \ol{\delta}_k 
&\le f(\lambda x^k + (1-\lambda)x^{k+1}) + \ol{\delta}_k  \notag \\
&= \phi_k(\lambda) + \ol{\delta}_k 
\qquad (\lambda \in [\lambda_k^*,1]).
\end{align}
Since, moreover, $\phi_k$ is convex by the convexity of $f$, it further follows by the secant-slope inequality for convex functions on the reals (Remark~IV.2.11 (b) and (c) of~\cite{AmannEscher}) and by~\eqref{eq:f(x^k)-increasing-step2,1} that
\begin{align} \label{eq:f(x^k)-increasing-step2,2}
\phi_k(1)-\phi_k(\lambda) \le \frac{\phi_k(1)-\phi_k(\lambda)}{1-\lambda}
\le \frac{\phi_k(1)-\phi_k(\lambda_k^*)}{1-\lambda_k^*}
\le \frac{\ol{\delta}_k}{1-\lambda_k^*}
\qquad (\lambda \in [0,\lambda_k^*]).
\end{align} 
Combining now~\eqref{eq:f(x^k)-increasing-step2,1} and~\eqref{eq:f(x^k)-increasing-step2,2}, we obtain~\eqref{eq:f(x^k)-increasing-step2}, as desired. 
\smallskip

As a third step, we show that the estimate~\eqref{eq:f(x^k)-mon-increasing} holds true for some $C \in [0,\infty)$. 
If Condition~\ref{cond:params}(i) is satisfied, then there is a $k_0 \in \N_0$ such that $\ol{\delta}_k = 0$ for all $k \ge k_0$. So, by the second step~\eqref{eq:f(x^k)-increasing-step2} the asserted estimate~\eqref{eq:f(x^k)-mon-increasing} holds true with 
\begin{align}
C := \max\{1/(1-\lambda_k^*): k \in \{0,\dots,k_0\}\} \in [0,\infty). 
\end{align}
If Condition~\ref{cond:params}(ii) is satisfied, then $\rho \in (0,\infty)$ and thus, by our definition, $\lambda_k^* = \lambda^*$ is independent of $k \in \N_0$. So, by the second step~\eqref{eq:f(x^k)-increasing-step2} the asserted estimate~\eqref{eq:f(x^k)-mon-increasing} holds true with 
\begin{align}
C := 1/(1-\lambda^*) \in [0,\infty). 
\end{align}

As a fourth step, we finally show that there is a null sequence $(\nu_k)$ such that $(f(x^k)+\nu_k)$ is monotonically increasing. 
Indeed, from the estimate~\eqref{eq:f(x^k)-mon-increasing} (with $\lambda :=0$), one immediately verifies that the sequence $(f(x^k)+\nu_k)$ with 
\begin{align}
\nu_k := C \sum_{l=0}^{k-1} \ol{\delta}_l - C \sum_{l=0}^{\infty} \ol{\delta}_l 
\qquad (k\in\N_0)
\end{align}
is monotonically increasing and that $(\nu_k)$ is a null sequence, as desired. 
\end{proof}

As a second important lemma, we show that the accumulation points of the (non-terminating) sequences generated by the core algorithm with a certain restriction parameter $\eps$ belong to the feasibility set $F_{-\eps}(Y)$. In proving this, also the strict convexity of the objective function $f$ is crucial. It should be noticed that $F_{-\eps}(Y)$ does not have to be assumed to be non-empty -- the non-emptiness follows by the non-termination assumption. 

\begin{lm} \label{lm:acc-pts-in-F_-eps(Y)}
Suppose that Condition~\ref{cond:strictly-convex} and \ref{cond:params} are satisfied. Suppose further that $(x^k)$ is generated by Algorithm~\ref{algo:-eps} with some $\eps \in [0,\infty)$ and that $(x^k)$ is non-terminating. Then every accumulation point of $(x^k)$ belongs to $F_{-\eps}(Y)$. 
\end{lm}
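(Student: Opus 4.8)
The plan is to argue by contradiction, combining the essentially-monotone-increasing behaviour of $(f(x^k))$ from Lemma~\ref{lm:f(x^k)-mon-increasing} with a \emph{uniform} strict-convexity estimate for $f$. So I would let $x^*$ be an accumulation point with $x^{k_j} \to x^*$ and assume, contrary to the claim, that $x^* \notin F_{-\eps}(Y)$; that is, $g_{i_0}(x^*,y_0) \ge -\eps + 3\gamma$ for some $i_0 \in I$, $y_0 \in Y$ and $\gamma > 0$. The strategy is to show that such a persistent violation forces consecutive iterates $x^{k_j}$ and $x^{k_j+1}$ to stay uniformly separated, and that this separation, via strict convexity of $f$, produces a uniform positive jump in the values $f(x^{k_j+1}) - f(x^{k_j})$, which is incompatible with the convergence of the monotone sequence $(f(x^k)+\nu_k)$.

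First I would track the strongest violators along the subsequence. By continuity of $g_{i_0}$ and $x^{k_j} \to x^*$ we get $g_{i_0}(x^{k_j},y_0) > -\eps + 2\gamma$ for large $j$; since $y^{k_j,i_0}$ is a $\ul{\delta}_{k_j,i_0}$-approximate maximizer of $g_{i_0}(x^{k_j},\cdot)$ and $\ul{\delta}_{k_j,i_0} \to 0$, the strongest violator then satisfies $g_{i_{k_j}}(x^{k_j},y^{k_j,i_{k_j}}) \ge g_{i_0}(x^{k_j},y^{k_j,i_0}) > -\eps + \gamma$ for large $j$. Because $(x^k)$ is non-terminating, at every iteration the refinement branch is taken, so $y^{k_j,i_{k_j}} \in Y^{k_j+1}$ by~\eqref{eq:def-Y^k+1_2}, and the feasibility of $x^{k_j+1}$ for $\mathrm{SIP}_{-\eps}(Y^{k_j+1})$ gives $g_{i_{k_j}}(x^{k_j+1},y^{k_j,i_{k_j}}) \le -\eps$. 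Hence $g_{i_{k_j}}(\cdot,y^{k_j,i_{k_j}})$ differs at $x^{k_j}$ and $x^{k_j+1}$ by more than $\gamma$, and uniform continuity of the finitely many $g_i$ on the compact set $X\times Y$ yields a $\delta_0 > 0$, independent of $j$, with $\norm{x^{k_j}-x^{k_j+1}} \ge \delta_0$.

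Next I would convert this uniform separation into a definite increase of $f$. I would introduce the uniform strict-convexity modulus
\[
\sigma := \inf\left\{ \tfrac12 f(a) + \tfrac12 f(b) - f\!\left(\tfrac{a+b}{2}\right) : a,b \in X,\ \norm{a-b} \ge \delta_0 \right\},
\]
which is strictly positive: the constraint set $\{(a,b)\in X\times X: \norm{a-b}\ge\delta_0\}$ is compact, the integrand is continuous, and it is strictly positive there by the strict convexity of $f$, so the infimum is attained and positive (should this set be empty, the bound $\norm{x^{k_j}-x^{k_j+1}}\ge\delta_0$ is already impossible and we are done). Applying this with $a=x^{k_j}$, $b=x^{k_j+1}$ and combining it with the estimate~\eqref{eq:f(x^k)-mon-increasing} at $\lambda=1/2$, namely $f(x^{k_j}) \le f(\tfrac12 x^{k_j}+\tfrac12 x^{k_j+1}) + C\ol{\delta}_{k_j}$, I obtain $f(x^{k_j+1}) - f(x^{k_j}) \ge 2\sigma - 2C\ol{\delta}_{k_j}$. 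Since $\ol{\delta}_k \to 0$ under Condition~\ref{cond:params}, the right-hand side exceeds $\sigma$ for all large $j$. On the other hand, by Lemma~\ref{lm:f(x^k)-mon-increasing} the sequence $(f(x^k)+\nu_k)$ is monotonically increasing and bounded (as $f$ is bounded on the compact set $X$ and $\nu_k \to 0$), hence convergent, so its increments tend to $0$; as $\nu_{k+1}-\nu_k \to 0$ as well, it follows that $f(x^{k+1})-f(x^k) \to 0$, contradicting $f(x^{k_j+1})-f(x^{k_j}) \ge \sigma > 0$. This contradiction forces $x^* \in F_{-\eps}(Y)$.

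I expect the main obstacle to be isolating the correct quantitative use of strict convexity: the pointwise inequality $f(\tfrac{a+b}{2}) < \tfrac12 f(a)+\tfrac12 f(b)$ is insufficient, and one must upgrade it to the uniform gap $\sigma>0$ valid for all pairs at distance $\ge\delta_0$, which is precisely where the compactness of $X$ and the continuity of $f$ enter decisively. Everything else is careful bookkeeping with the vanishing approximate-solution tolerances $\ul{\delta}_{k,i}$ and $\ol{\delta}_k$, the definition of the strongest violator, and the discretization update rule guaranteeing $y^{k,i_k}\in Y^{k+1}$.
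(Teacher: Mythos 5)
Your proof is correct, but it takes a genuinely different route from the paper's. Both arguments rest on the same ingredients: the strongest violator $y^{k,i_k}$ enters $Y^{k+1}$ while $x^{k+1}$ is feasible for $\mathrm{SIP}_{-\eps}(Y^{k+1})$, the approximate-maximizer property of the $y^{k,i}$, and Lemma~\ref{lm:f(x^k)-mon-increasing} combined with strict convexity of $f$. The paper uses these directly and qualitatively: it extracts a subsequence $(x^{k_l})$ converging to the accumulation point $x^*$ together with convergent strongest violators $\ol{y}^{k_l} \to y^*$, shows via the midpoint estimate of Lemma~\ref{lm:f(x^k)-mon-increasing} and strict convexity that a further subsequence of the shifted iterates $x^{k_{l_j}+1}$ converges to the \emph{same} limit $x^*$ (otherwise one would get $f(x^*) \le f\left(\tfrac{x^*+x^{**}}{2}\right) < \tfrac12\left(f(x^*)+f(x^{**})\right) = f(x^*)$), and then simply passes to the limit in the two inequalities $g_i(x^{k_l},y) \le \ol{g}(x^{k_l},\ol{y}^{k_l}) + \ul{\delta}_{k_l,i}$ and $\ol{g}(x^{k_{l_j}+1},\ol{y}^{k_{l_j}}) \le -\eps$ to conclude $g_i(x^*,y) \le -\eps$. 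You instead argue by contradiction and make strict convexity quantitative: a hypothetical violation at $x^*$ forces a uniform separation $\norm{x^{k_j}-x^{k_j+1}} \ge \delta_0$ of consecutive iterates, which via a uniform convexity gap $\sigma>0$ (obtained by compactness of $\{(a,b) \in X \times X: \norm{a-b} \ge \delta_0\}$) yields a uniform jump $f(x^{k_j+1})-f(x^{k_j}) \ge 2\sigma - 2C\ol{\delta}_{k_j}$, contradicting the fact that the increments of the bounded monotone sequence $(f(x^k)+\nu_k)$ tend to zero. What your route buys: no nested subsequence extraction and no need to pass to a limit $y^*$ of the violators; what it costs: the extra compactness argument for $\sigma > 0$ and the uniform-continuity step producing $\delta_0$. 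The paper's route stays purely limit-based and produces as a by-product the structural fact that shifted subsequences share the same limit, which is reused in spirit in Corollary~\ref{cor:algo_0-convergent}. All delicate points in your write-up check out: $\ol{\delta}_k \to 0$ holds under both alternatives of Condition~\ref{cond:params}, feasibility of $x^{k_j+1}$ for the discretized problem is indeed part of the definition of a $\ol{\delta}_{k_j+1}$-approximate solution, and your fallback for the case where no pair of points of $X$ is $\delta_0$-separated is handled correctly.
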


\begin{proof}
We use the abbreviations $\ol{g}(x,y) := \max_{i\in I} g_i(x,y)$ for all $(x,y) \in X\times Y$ and $\ol{y}^{k} := y^{k,i_k}$ (strongest violator) throughout the proof. 
As a first step, we observe that
\begin{align} \label{eq:acc-pts-in-F_-eps(Y),1}
g_i(x^k,y) \le \ol{g}(x^k,\ol{y}^{k}) + \ul{\delta}_{k,i}
\qquad \text{and} \qquad
\ol{g}(x^{k+1},\ol{y}^{k}) \le -\eps
\end{align}
for all $y \in Y$ and all $k\in \N_0$. 
Indeed, since $y^{k,i}$ is a $\ul{\delta}_{k,i}$-approximate solution of~$\mathrm{Aux}_i(x^k)$, we see that for every $y \in Y$ and every $i \in I$ and $k \in \N_0$
\begin{align*}
g_i(x^k,y) \le g_i(x^k,y^{k,i}) + \ul{\delta}_{k,i} \le g_{i_k}(x^k,y^{k,i_k}) + \ul{\delta}_{k,i} \le \ol{g}(x^k,\ol{y}^{k}) + \ul{\delta}_{k,i},
\end{align*}
which is~(\ref{eq:acc-pts-in-F_-eps(Y),1}.a). Since moreover $\ol{y}^k \in Y^{k+1}$ by the definition of $Y^{k+1}$ 
and since $x^{k+1}$ by its approximate solution property 
belongs to $F_{-\eps}(Y^{k+1})$, we further see that~(\ref{eq:acc-pts-in-F_-eps(Y),1}.b) is satisfied, as desired.
\smallskip

As a second step, we show that for every convergent subsequence $(x^{k_l})$ of $(x^k)$ there is another subsequence $(k_{l_j})$ such that
\begin{align} \label{eq:acc-pts-in-F_-eps(Y),2}
x^{k_{l_j}+1} \longrightarrow \lim_{l\to\infty} x^{k_l} \qquad (j\to\infty).
\end{align}
So, let $(x^{k_l})$ be a convergent subsequence of $(x^k)$. 
Since $X$ is compact, the sequence $(x^{k_l+1})$ has a convergent subsequence $(x^{k_{l_j}+1})$. 
We denote the limits of these convergent subsequences by
\begin{align}
x^* := \lim_{l\to\infty} x^{k_l} \qquad \text{and} \qquad 
x^{**} := \lim_{j\to\infty} x^{k_{l_j}+1}
\end{align} 
and will show that $x^* = x^{**}$. 
It follows from Lemma~\ref{lm:f(x^k)-mon-increasing} with $\lambda := 1/2$ that
\begin{align} \label{eq:acc-pts-in-F_-eps(Y),2.1}
f(x^*) = \lim_{j\to\infty} f(x^{k_{l_j}}) \le \lim_{j\to\infty} \bigg( f\bigg(\frac{x^{k_{l_j}} + x^{k_{l_j}+1}}{2}\bigg) + C \ol{\delta}_{k_{l_j}} \bigg) = f\bigg( \frac{x^*+x^{**}}{2}\bigg).
\end{align}  
It further follows from Lemma~\ref{lm:f(x^k)-mon-increasing} that the sequence $(f(x^k)+ \nu_k)$ is monotonically increasing for some null sequence $(\nu_k)$ and therefore
\begin{align} \label{eq:acc-pts-in-F_-eps(Y),2.2}
f(x^*) = \lim_{j\to\infty} \big( f(x^{k_{l_j}}) + \nu_{k_{l_j}} \big) = \sup_{k\in\N_0} (f(x^k) + \nu_k)
&= \lim_{j\to\infty} \big( f(x^{k_{l_j}+1}) + \nu_{k_{l_j}+1} \big) \notag \\
&= f(x^{**}).
\end{align}
Assuming now that $x^* \ne x^{**}$, we obtain from~\eqref{eq:acc-pts-in-F_-eps(Y),2.1}, the strict convexity of $f$, and~\eqref{eq:acc-pts-in-F_-eps(Y),2.2} the following strict inequality:
\begin{align}
f(x^*) \le f\bigg( \frac{x^*+x^{**}}{2}\bigg) < \frac{f(x^*) + f(x^{**})}{2} = f(x^*).
\end{align}
Contradiction! So, we must have $x^* = x^{**}$, as desired.
\smallskip

As a third step, we show that every accumulation point of $(x^k)$  belongs to $F_{-\eps}(Y)$. 
So, let $x^*$ be an accumulation point of $(x^k)$. Since $Y$ is compact, there is a subsequence $(k_l)$ such that $x^{k_l} \longrightarrow x^*$ as $l \to \infty$ and such that $(\ol{y}^{k_l})$ converges to some $y^* \in Y$.  Also, by the second step, there exists yet another subsequence $(k_{l_j})$ such that
\begin{align} \label{eq:shifted-seq-same-limit,refined}
\lim_{j\to\infty} x^{k_{l_j}+1} = x^* = \lim_{l\to\infty} x^{k_l}. 
\end{align}
Combining now~(\ref{eq:acc-pts-in-F_-eps(Y),1}.a), \eqref{eq:shifted-seq-same-limit,refined}, (\ref{eq:acc-pts-in-F_-eps(Y),1}.b), we obtain the following estimate for every $y \in Y$ and every $i \in I$:
\begin{align*}
g_i(x^*,y) = \lim_{l\to\infty} g_i(x^{k_l},y) \le \lim_{l\to\infty} \ol{g}(x^{k_l},\ol{y}^{k_l}) = \ol{g}(x^*,y^*) 
= \lim_{j\to\infty} \ol{g}(x^{k_{l_j}+1},\ol{y}^{k_{l_j}}) \le -\eps.
\end{align*}
In other words, $x^*$ belongs to $F_{-\eps}(Y)$, as desired. 
\end{proof}

\begin{cor} \label{cor:algo_0-convergent}
Suppose that Condition~\ref{cond:strictly-convex} and \ref{cond:params} are satisfied and that $F_0(Y) \ne \emptyset$. Suppose further that $(x^k)$ is generated by Algorithm~\ref{algo:-eps} with $\eps := 0$ and that $(x^k)$ is non-terminating. Then $(x^k)$ converges to the unique solution of~\eqref{eq:SIP}. 
\end{cor}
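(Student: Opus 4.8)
The plan is to prove that every accumulation point of $(x^k)$ coincides with the unique solution $x_0$ of~\eqref{eq:SIP}, and then to upgrade this to genuine convergence via a standard compactness argument. Note first that $x_0$ exists and is unique by Proposition~\ref{prop:ex-of-SIP-solutions} (applied with $Y^* = Y$ and $\eps = 0$), since $F_0(Y) \ne \emptyset$ and Condition~\ref{cond:strictly-convex} holds. Write $f^* := \min_{x \in F_0(Y)} f(x) = f(x_0)$. Since $X$ is compact, $(x^k)$ has at least one accumulation point, so it suffices to show that every such point equals $x_0$.

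So let $x^*$ be an accumulation point, and pick a subsequence $x^{k_l} \to x^*$. I would establish $f(x^*) = f^*$ by sandwiching. For the lower bound, Lemma~\ref{lm:acc-pts-in-F_-eps(Y)} (with $\eps = 0$) gives $x^* \in F_0(Y)$, whence $f(x^*) \ge f^*$ by the definition of $f^*$. For the upper bound, I would exploit that $x^k$ is a $\ol{\delta}_k$-approximate solution of $\mathrm{SIP}_0(Y^k)$: since $Y^k \subset Y$ we have $F_0(Y) \subset F_0(Y^k)$, hence $\min_{x \in F_0(Y^k)} f(x) \le f^*$, and therefore $f(x^k) \le \min_{x \in F_0(Y^k)} f(x) + \ol{\delta}_k \le f^* + \ol{\delta}_k$ for every $k$. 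Observing that $\ol{\delta}_k \to 0$ (in case (i) of Condition~\ref{cond:params} eventually zero, in case (ii) because summability forces a null sequence) and passing to the limit along $(k_l)$ using the continuity of $f$ yields $f(x^*) \le f^*$. Combining the two bounds gives $f(x^*) = f^*$, so $x^*$ is a solution of~\eqref{eq:SIP}; by the uniqueness part of Proposition~\ref{prop:ex-of-SIP-solutions} it must equal $x_0$.

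Finally, I would conclude the convergence $x^k \to x_0$ from the fact that $x_0$ is the unique accumulation point of $(x^k)$ in the compact metric space $X$: if convergence failed, some subsequence would stay bounded away from $x_0$, and by compactness it would have a further subsequence converging to an accumulation point $\ne x_0$, contradicting uniqueness. The argument is essentially routine once the two lemmas are in place; the only points requiring a little care are the verification that $\ol{\delta}_k \to 0$ under both alternatives of Condition~\ref{cond:params} and the standard "unique accumulation point in a compact space implies convergence" step, neither of which poses a real obstacle. The substantive content is entirely carried by Lemma~\ref{lm:acc-pts-in-F_-eps(Y)} (feasibility of limits) and the relaxation monotonicity $F_0(Y) \subset F_0(Y^k)$ that produces the matching upper bound.
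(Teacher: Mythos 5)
Your proof is correct and follows essentially the same route as the paper: both show that every accumulation point is feasible (via Lemma~\ref{lm:acc-pts-in-F_-eps(Y)} with $\eps=0$) and optimal (via the relaxation bound $f(x^k)\le\min_{x\in F_0(Y)}f(x)+\ol{\delta}_k$ coming from $F_0(Y^k)\supset F_0(Y)$), hence equal to the unique solution, and then conclude convergence by compactness. Your write-up merely makes explicit two points the paper leaves tacit — that $\ol{\delta}_k\to 0$ under both alternatives of Condition~\ref{cond:params}, and the standard unique-accumulation-point argument.
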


\begin{proof}
We show that $(x^k)$ has only one accumulation point, namely the unique solution $x^*$ of~\eqref{eq:SIP} (Proposition~\ref{prop:ex-of-SIP-solutions}). It is then clear that $(x^k)$ must converge to $x^*$. 
So, let $x^{**}$ be any accumulation point of $(x^k)$ and let $(x^{k_l})$ be a subsequence with $x^{k_l} \longrightarrow x^{**}$ as $l\to\infty$. Since $x^{**} \in F_0(Y)$ by Lemma~\ref{lm:acc-pts-in-F_-eps(Y)} with $\eps:= 0$, we have on the one hand that
\begin{align} \label{eq:algo_0-convergent,1}
\min_{x\in F_0(Y)} f(x) \le f(x^{**}).
\end{align}
Since $x^k$ is a $\ol{\delta}_k$-approximate solution of~$\mathrm{SIP}_{-\eps}(Y^k)$ with $\eps := 0$ and $F_0(Y^k) \supset F_0(Y)$ for every $k \in \N_0$, we have on the other hand that
\begin{align} \label{eq:algo_0-convergent,2}
f(x^k) \le \min_{x\in F_{0}(Y^k)} f(x) + \ol{\delta}_k 
\le \min_{x\in F_0(Y)} f(x) + \ol{\delta}_k 
\qquad (k\in\N_0).
\end{align}
Combining now~\eqref{eq:algo_0-convergent,1} and~\eqref{eq:algo_0-convergent,2}, we conclude that
\begin{align} \label{eq:algo_0-convergent,3}
\min_{x\in F_0(Y)} f(x) \le f(x^{**}) = \lim_{l\to\infty} f(x^{k_l}) \le  \min_{x\in F_0(Y)} f(x).
\end{align}
In conjunction with the aforementioned feasibility relation $x^{**} \in F_0(Y)$, the relation~\eqref{eq:algo_0-convergent,3} proves that $x^{**}$ is a solution of~\eqref{eq:SIP}. And therefore $x^{**} = x^*$, as desired. 
\end{proof}

\begin{cor} \label{cor:algo_-eps-terminating}
Suppose that Condition~\ref{cond:strictly-convex} and \ref{cond:params} are satisfied. Suppose further that $(x^k)$ is generated by Algorithm~\ref{algo:-eps} for some $\eps \in (0,\infty)$. Then $(x^k)$ is terminating. 
\end{cor}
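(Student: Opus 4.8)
The plan is to argue by contradiction: I assume the generated sequence $(x^k)$ is non-terminating and derive a contradiction from the strict positivity of $\eps$. The conceptual point is that non-termination forces the strongest violators to become "almost feasible from the wrong side" in the limit (their constraint value stays $\ge 0$ in the limit), whereas Lemma~\ref{lm:acc-pts-in-F_-eps(Y)} forces every accumulation point of $(x^k)$ to be feasible for the $\eps$-restricted problem (constraint value $\le -\eps$). These two facts are incompatible precisely when $\eps > 0$, which is exactly the hypothesis of the corollary.

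First I would record what non-termination supplies at each iteration. Writing $\ol{g}(x,y) := \max_{i\in I} g_i(x,y)$ and $\ol{y}^k := y^{k,i_k}$ for the strongest violator, just as in the preceding lemma, the assumption that the termination branch in Step~4 is never taken means that for every $k \in \N_0$ there is an index $j_k \in I$ with $g_{j_k}(x^k,y^{k,j_k}) > -\ul{\delta}_{k,j_k}$. Since $i_k$ maximizes $g_i(x^k,y^{k,i})$ over $i \in I$, this yields
\begin{align*}
\ol{g}(x^k,\ol{y}^k) \ge g_{i_k}(x^k,y^{k,i_k}) \ge g_{j_k}(x^k,y^{k,j_k}) > -\ul{\delta}_{k,j_k} \qquad (k \in \N_0).
\end{align*}

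Next I would extract a suitable subsequence and pass to the limit. Since $X$ is compact, $(x^k)$ has an accumulation point $x^*$, so some subsequence satisfies $x^{k_l} \to x^*$. Using the compactness of $Y$ and the finiteness of $I$, I would thin this out further so that, along the remaining subsequence, $\ol{y}^{k_l} \to y^*$ for some $y^* \in Y$ and the violating index $j_{k_l}$ is a fixed $j \in I$. Because $\ol{g}$ is continuous on $X\times Y$ as a finite maximum of continuous functions and $\ul{\delta}_{k,j} \to 0$ by Condition~\ref{cond:params}, passing to the limit in the displayed inequality gives $\ol{g}(x^*,y^*) \ge 0$.

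Finally I would invoke Lemma~\ref{lm:acc-pts-in-F_-eps(Y)}: since $(x^k)$ is assumed non-terminating, its accumulation point $x^*$ lies in $F_{-\eps}(Y)$, and hence $\ol{g}(x^*,y^*) = \max_{i\in I} g_i(x^*,y^*) \le -\eps$. Combining the two bounds gives $0 \le \ol{g}(x^*,y^*) \le -\eps$, which is absurd for $\eps > 0$; therefore the non-termination assumption is untenable and $(x^k)$ must terminate. I expect the only delicate points to be the bookkeeping of the nested subsequence extractions (accumulation point of $(x^k)$, convergence of $\ol{y}^{k_l}$, stabilization of $j_{k_l}$) and the explicit observation that the whole argument collapses harmlessly at $\eps = 0$ — which is exactly why the corollary is restricted to $\eps > 0$ and why it complements Corollary~\ref{cor:algo_0-convergent} rather than overlapping it.
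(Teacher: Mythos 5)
Your proposal is correct and follows essentially the same route as the paper's own proof: assume non-termination, extract convergent subsequences of the iterates and of the strongest violators, pass to the limit in the violation inequality to get $\ol{g}(x^*,y^*) \ge 0$, and contradict the bound $\ol{g}(x^*,y^*) \le -\eps$ supplied by Lemma~\ref{lm:acc-pts-in-F_-eps(Y)}. Your extra bookkeeping (introducing $j_k$ and stabilizing it along a subsequence) just makes explicit an index detail the paper glosses over in its displayed inequality; otherwise the arguments coincide.
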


\begin{proof}
Assume that $(x^k)$ is non-terminating. Since $X$ and $Y$ are compact, we can then choose convergent subsequences $(x^{k_l})$ and $(\ol{y}^{k_l})$ of $(x^k)$ and $(\ol{y}^k) := (y^{k,i_k})$ (strongest violators), respectively. We denote their limits by $x^*$ and $y^*$ and write $\ol{g}(x,y) := \max_{i\in I}g_i(x,y)$ for brevity. It further follows by the non-termination assumption that 
\begin{align} \label{eq:algo_-eps-terminating,1}
\ol{g}(x^k,\ol{y}^k) \ge g_{i_k}(x^k,y^{k,i_k}) > -\ul{\delta}_{k,i} \qquad (k\in\N_0)
\end{align}
because otherwise the algorithm would terminate for some $k \in \N_0$.  And finally it follows by the non-termination assumption and Lemma~\ref{lm:acc-pts-in-F_-eps(Y)} that every accumulation point of $(x^k)$ belongs to $F_{-\eps}(Y)$. In particular, 
\begin{align} \label{eq:algo_-eps-terminating,2}
\ol{g}(x^*,y^*) \le -\eps
\end{align} 
Combining now~\eqref{eq:algo_-eps-terminating,1} and~\eqref{eq:algo_-eps-terminating,2}, we see that
\begin{align}
0 \le \lim_{l\to\infty} \ol{g}(x^{k_l},\ol{y}^{k_l}) = \ol{g}(x^*,y^*) \le -\eps < 0. 
\end{align}
Contradiction!
\end{proof}

\section{Approximate solutions with the sequential algorithm}

In this section, we apply the core algorithm (Algorithm~\ref{algo:-eps}) with smaller and smaller restriction parameters $\eps > 0$ in sequence in order to obtain our sequential finitely terminating algorithm (Algorithm~\ref{algo:sequential}). 

\subsection{Sequential algorithm} 

\begin{algo} \label{algo:feas,finite}
Input: sequences $(\ol{\delta}_{k})$, $(\ul{\delta}_{k,i})$ in $[0,\infty)$ for $i \in I$, $\rho \in [0,\infty) \cup \{\infty\}$, $r \in (1,\infty)$, a number $\eps_{0} > 0$ and a fnite subset $Y^{0}$ of $Y$. With these inputs at hand, perform the following steps: 
\begin{enumerate}
\item Set $k=0$

\item Check whether or not the restricted discretized problem $\mathrm{SIP}_{-\eps_k}(Y^k)$ is feasible
\begin{itemize}
\item If $\mathrm{SIP}_{-\eps_k}(Y^k)$ is infeasible, then set
\begin{align*}
\eps_{k+1} := \eps_k/r \qquad \text{and} \qquad
Y^{k+1} := Y^k
\end{align*}
and return to Step 2 with $k$ replaced by $k+1$

\item If $\mathrm{SIP}_{-\eps_k}(Y^k)$ is feasible, then do the following:
\begin{enumerate}
\item[2.1] Compute a $\ol{\delta}_k$-approximate solution $x^k$ of $\mathrm{SIP}_{-\eps_k}(Y^k)$ 

\item[2.2] Compute a $\ul{\delta}_k$-approximate solution $y^{k,i}$ of  $\mathrm{Aux}_i(x^k)$ for every $i \in I$

\item[2.3] Check the sign of $g_i(x^k,y^{k,i}) + \ul{\delta}_{k,i}$ for  $i\in I$
\begin{itemize}
\item If $g_i(x^k,y^{k,i}) > -\ul{\delta}_{k,i}$ for some $i\in I$, then set
\begin{gather*}
\eps_{k+1} := \eps_k, \\ 
Y^{k+1} := \{y \in Y^k: g_i(x^k,y) \ge -\eps_k - \rho \text{ for some } i \in I\} \cup \{y^{k,i_k}\}
\end{gather*}
and return to Step 2 with $k$ replaced by $k+1$

\item If $g_i(x^k,y^{k,i}) \le -\ul{\delta}_{k,i}$ for every $i \in I$, then terminate.
\end{itemize}
\end{enumerate}
\end{itemize}
\end{enumerate}
\end{algo}

Just like in the core algorithm, the points $y^{k,i_k}$ in the above algorithm denote strongest violators. 

\begin{algo} \label{algo:sequential}
Input: sequences $(\ol{\delta}_{k})$, $(\ul{\delta}_{k,i})$ in $[0,\infty)$ for $i \in I$, $\rho \in [0,\infty) \cup \{\infty\}$, $r \in (1,\infty)$, a number $\eps_{0,0} > 0$, a fnite subset $Y^{0,0}$ of $Y$, and a termination index $m^* \in \N_0 \cup \{\infty\}$. With these inputs at hand, perform the following steps: 
\begin{enumerate}
\item Set $m=0$

\item Apply Algorithm~\ref{algo:feas,finite} with $(\ol{\delta}_{m,k}) := (\ol{\delta}_{m+k})$, $(\ul{\delta}_{k,i})$, $\rho$, $r$, $\eps_{m,0}$, $Y^{m,0}$ as inputs in order to compute in finitely many steps an $x^{m, k_m} \in X$ and an $\eps_{m, k_m} \in (0,\eps_{m,0}]$ such that
\begin{align} \label{eq:seq-algo}
x^{m, k_m} \in F_0(Y) 
\qquad \text{and} \qquad
f(x^{m, k_m}) \le \min_{x \in F_{-\eps_{m, k_m}}(Y)} f(x) + \ol{\delta}_{m, k_m}
\end{align} 

\item If $m < m^*$, then set $\eps_{m+1, 0} := \eps_{m, k_m}/r$, choose a finite subset $Y^{m+1, 0}$ of $Y$, and return to Step 2 with $m$ replaced by $m+1$

\item If $m = m^*$, then terminate.
\end{enumerate}
\end{algo}

\subsection{Approximate solutions in finitely many iterations}

With the help of the termination result for the core algorithm (Corollary~\ref{cor:algo_-eps-terminating}) and the convergence rate~\eqref{eq:upper-bds-converge-to-f*-convergence-rate}, we can now establish our main finite-termination result for the sequential algorithm. It says that for every given $\delta > 0$, the sequential algorithm with termination index $m^*$ chosen as in~\eqref{eq:m*(delta,f,eps*,eps_0,0,r,delta_m)} below terminates at a $\delta$-approximate solution of~\eqref{eq:SIP} after finitely many, namely precisely $m^*$, iterations. 

\begin{lm}  \label{lm:algo-feas-fin-terminates}
Suppose that Condition~\ref{cond:strictly-convex} and~\ref{cond:params} are satisfied and that~\eqref{eq:SIP} is strictly feasible. Suppose further that $(x^k)$ and $(\eps_k)$ are generated by Algorithm~\ref{algo:feas,finite}. Then $(x^k)$ terminates at some $k^* \in \N_0$ and 
\begin{gather}
x^{k^*} \in F_0(Y) \qquad \text{and} \qquad f(x^{k^*}) \le \min_{x \in F_{-\eps_{k^*}}(Y)} f(x) + \ol{\delta}_{k^*} \label{eq:algo-feas-fin-terminates,1}\\
\eps_{k^*} \in (0,\eps_0] \label{eq:algo-feas-fin-terminates,2}
\end{gather}
\end{lm}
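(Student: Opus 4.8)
The plan is to split the statement into two parts: the finite termination of Algorithm~\ref{algo:feas,finite}, and the three asserted relations \eqref{eq:algo-feas-fin-terminates,1}--\eqref{eq:algo-feas-fin-terminates,2} at the termination index $k^*$. The relation \eqref{eq:algo-feas-fin-terminates,2} is essentially free: by inspection of the algorithm, $\eps_k$ is only ever left unchanged or divided by $r>1$, so $\eps_k \in (0,\eps_0]$ for every $k$ and in particular $\eps_{k^*} \in (0,\eps_0]$. The heart of the matter is therefore termination, and the key structural observation I would exploit is that strict feasibility provides a uniform safety margin below which the discretized restricted problems can never be infeasible.

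First I would extract this margin. By the strict feasibility assumption \eqref{eq:SIP-strictly-feasible} there is some $\hat x$ with $g_i(\hat x,y)<0$ for all $y\in Y$ and $i\in I$; since the $g_i$ are continuous, $Y$ is compact and $I$ is finite, there is a $c>0$ with $\hat x \in F_{-c}(Y)$. Consequently $\hat x \in F_{-\eps}(Y)\subseteq F_{-\eps}(Y^*)$ for every $\eps\in(0,c]$ and every subset $Y^*\subseteq Y$, so $\mathrm{SIP}_{-\eps}(Y^*)$ is feasible as soon as $\eps\le c$. In Algorithm~\ref{algo:feas,finite}, $\eps_k$ is decreased only in the infeasible branch, which by the preceding remark can be entered only while $\eps_k>c$. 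Since each decrease divides $\eps_k$ by $r>1$, at most $\lceil \log_r(\eps_0/c)\rceil$ decreases can occur, so there is an iteration $k_0$ after which $\eps_k$ equals a fixed value $\eps^*\in(0,c]$ and every iteration lands in the feasible branch (and hence produces a genuine $x^k$).

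From $k_0$ on, the feasibility check is vacuously satisfied and the update rule for $Y^{k+1}$ coincides with that of the core algorithm (Algorithm~\ref{algo:-eps}) with $Y^{k+1}_2=\{y^{k,i_k}\}$ and restriction parameter $\eps^*$. Hence the tail of Algorithm~\ref{algo:feas,finite} is exactly a run of the core algorithm with the fixed parameter $\eps^*>0$ and the shifted tolerance sequences $(\ol{\delta}_{k_0+j})_j$, $(\ul{\delta}_{k_0+j,i})_j$, which still satisfy Condition~\ref{cond:params} because the properties required there (eventual vanishing, summability, convergence to $0$) are inherited by tails. I would then argue by contradiction: if Algorithm~\ref{algo:feas,finite} were non-terminating, this tail would be an infinite non-terminating core-algorithm run with $\eps^*\in(0,\infty)$, contradicting Corollary~\ref{cor:algo_-eps-terminating}. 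Therefore Algorithm~\ref{algo:feas,finite} terminates at some $k^*\in\N_0$.

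Finally I would read off \eqref{eq:algo-feas-fin-terminates,1} directly from the terminating step (which is a feasible-branch step, whatever the value of $k^*$). The termination test gives $g_i(x^{k^*},y^{k^*,i})\le-\ul{\delta}_{k^*,i}$ for all $i$, and combining this with the $\ul{\delta}_{k^*,i}$-approximate-solution property of $y^{k^*,i}$ for $\mathrm{Aux}_i(x^{k^*})$ yields $\max_{y\in Y}g_i(x^{k^*},y)\le 0$, i.e.\ $x^{k^*}\in F_0(Y)$. For the objective bound, the feasible branch has just produced a $\ol{\delta}_{k^*}$-approximate solution $x^{k^*}$ of $\mathrm{SIP}_{-\eps_{k^*}}(Y^{k^*})$, so $f(x^{k^*})\le \min_{x\in F_{-\eps_{k^*}}(Y^{k^*})}f(x)+\ol{\delta}_{k^*}$; since $Y^{k^*}\subseteq Y$ gives $F_{-\eps_{k^*}}(Y)\subseteq F_{-\eps_{k^*}}(Y^{k^*})$ and hence a larger minimization domain on the right, this dominates $\min_{x\in F_{-\eps_{k^*}}(Y)}f(x)+\ol{\delta}_{k^*}$, as claimed. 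The main obstacle I anticipate is not any single estimate but the bookkeeping in the termination argument: making precise that the two branches interleave only finitely often and that, once the safety margin $c$ is reached, the remaining iterations genuinely realise a core-algorithm run, so that Corollary~\ref{cor:algo_-eps-terminating} is applicable.
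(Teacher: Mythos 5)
Your proposal is correct and takes essentially the same route as the paper's proof: strict feasibility forces the restriction parameter to stabilize at a positive value after finitely many decreases, the tail of Algorithm~\ref{algo:feas,finite} is then recognized as a run of the core algorithm (Algorithm~\ref{algo:-eps}) with that fixed positive parameter and shifted tolerance sequences, so Corollary~\ref{cor:algo_-eps-terminating} yields termination, and the relations~\eqref{eq:algo-feas-fin-terminates,1} and~\eqref{eq:algo-feas-fin-terminates,2} are read off from the termination test exactly as in the paper. One harmless slip: the stabilized value need not lie in $(0,c]$ but only in $(0,\eps_0]$, since the discretized problem $\mathrm{SIP}_{-\eps_k}(Y^k)$ is a relaxation and can be feasible even while $\eps_k > c$; your argument uses nothing beyond positivity of that value, so nothing breaks.
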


\begin{proof}
In the entire proof, we denote the set of iteration indices by $K$.
As a first step, we show that there exists a $k_0 \in \N_0$ such that $\mathrm{SIP}_{-\eps_k}(Y^k)$ is feasible for every iteration index $k \in K$ with $k \ge k_0$. 
Assume the contrary, then 
\begin{align} \label{eq:algo-feas-finite-terminates,step-1}
F_{-\eps_k}(Y^k) = \emptyset
\end{align} 
for infinitely many $k \in K$ and, in particular, $K = \N_0$. Consequently, by the definition of Algorithm~\ref{algo:feas,finite}, the restriction parameter is decreased infinitely often by the factor $r > 1$ and therefore
\begin{align} \label{eq:eps_k-to-zero,algo-feas-finite-terminates}
\eps_k \longrightarrow 0 \qquad (k\to\infty). 
\end{align}
Since now~\eqref{eq:SIP} is strictly feasible by assumption, there exists an $\eps^* > 0$ with $F_{-\eps^*}(Y) \ne \emptyset$ by virtue of~(\ref{eq:opt-values-f*_-eps-converge-to-f*,step-1}.b). And thus, by~\eqref{eq:eps_k-to-zero,algo-feas-finite-terminates} and (\ref{eq:opt-values-f*_-eps-converge-to-f*,step-1}.a), there exists a $k_0 \in \N_0$ such that 
\begin{align}
F_{-\eps_k}(Y^k) \supset F_{-\eps_k}(Y) \supset F_{-\eps^*}(Y) \ne \emptyset
\qquad (k \ge k_0).
\end{align}
Contradiction to our assumption~\eqref{eq:algo-feas-finite-terminates,step-1}!
\smallskip

As a second step, we show that $(x^k)$ terminates at some $k^* \in \N_0$ and that~\eqref{eq:algo-feas-fin-terminates,1} and~\eqref{eq:algo-feas-fin-terminates,2} are satisfied. 
Indeed, by the first step and the definition of Algorithm~\ref{algo:feas,finite}, the restriction parameter is not decreased anymore from iteration $k_0$ onwards, that is, 
\begin{align}
\eps_k = \dotsb = \eps_{k_0} =: \eps_* \qquad (K \ni k \ge k_0)
\end{align} 
We therefore see that the sequence $(x^{k_0+k})_{k\in K-k_0}$ can be generated by Algorithm~\ref{algo:-eps} with $\eps := \eps_* \in (0,\infty)$ and with input $(\ol{\delta}_{k_0+k})$, $(\ul{\delta}_{k_0+k,i})$, $\rho$. So, by virtue of Corollary~\ref{cor:algo_-eps-terminating}, the sequence $(x^{k_0+k})_{k\in K-k_0}$ and, by extension, $(x^k)_{k\in K}$ must be terminating. We denote the terminal iteration index of the latter sequence by $k^* := \max K < \infty$. It then follows by the definition of Algorithm~\ref{algo:feas,finite} 
that
\begin{align}
g_i(x^{k^*},y^{k^*,i}) \le -\ul{\delta}_{k^*,i} \qquad (i \in I).
\end{align}
Since $y^{k^*,i}$ is a $\ul{\delta}_{k^*,i}$-approximate solution of $\mathrm{Aux}_i(x^{k^*})$, we therefore see that 
\begin{align}
g_i(x^{k^*},y) \le g_i(x^{k^*},y^{k^*,i}) + \ul{\delta}_{k^*,i} \le 0
\qquad (y \in Y \text{ and } i \in I)
\end{align}
and thus~(\ref{eq:algo-feas-fin-terminates,1}.a) is satisfied. Since $x^{k^*}$ is a $\ol{\delta}_{k^*}$-approximate solution of $\mathrm{SIP}_{-\eps_{k^*}}(Y^{k^*})$ and $F_{-\eps_{k^*}}(Y^{k^*}) \supset F_{-\eps_{k^*}}(Y)$, we further see that~(\ref{eq:algo-feas-fin-terminates,1}.b) is satisfied. 
%
And finally, \eqref{eq:algo-feas-fin-terminates,2} is satisfied because the sequence $(\eps_k)$ is obviously monotonically decreasing. 
\end{proof}

\begin{thm} \label{thm:seq-algo}
Suppose that Condition~\ref{cond:strictly-convex} and~\ref{cond:params} are satisfied and that~\eqref{eq:SIP} is strictly feasible. Suppose further that $\delta > 0$, $\eps_{0,0} > 0$, $r \in (1,\infty)$ and $\eps^* > 0$ with $F_{-\eps^*}(Y) \ne \emptyset$ are given and that $m^* \in \N_0$ is chosen so large that
\begin{align} \label{eq:m*(delta,f,eps*,eps_0,0,r,delta_m)}
\eps_{0,0}/r^m \le \eps^*, 
\qquad
\omega_f(\diam X/\eps^* \cdot \eps_{0,0}/r^m) \le \delta/2,
\qquad
\ol{\delta}_m \le \delta/2 
\qquad (m \ge m^*).
\end{align}
If the sequence $(x^{m, k_m})$ is generated by Algorithm~\ref{algo:sequential} with $m^*$ as above, then it terminates after finitely many, namely $m^*$, iterations at a $\delta$-approximate solution of~\eqref{eq:SIP}.
\end{thm}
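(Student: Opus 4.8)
The plan is to assemble the statement from three ingredients already established in the excerpt: the finite-termination-and-output guarantee of the inner routine (Lemma~\ref{lm:algo-feas-fin-terminates}), a bookkeeping of the restriction parameters $\eps_{m,0}$ and $\eps_{m,k_m}$ produced along the outer loop, and the convergence-rate estimate~\eqref{eq:upper-bds-converge-to-f*-convergence-rate}. First I would settle finite termination. At each outer index $m$, Algorithm~\ref{algo:sequential} invokes Algorithm~\ref{algo:feas,finite} with the \emph{shifted} tolerance sequence $(\ol{\delta}_{m,k})_k = (\ol{\delta}_{m+k})_k$ alongside $(\ul{\delta}_{k,i})$, $\rho$, $r$, $\eps_{m,0}$, $Y^{m,0}$. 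Here one must verify that the hypotheses of Lemma~\ref{lm:algo-feas-fin-terminates} are inherited: strict feasibility of~\eqref{eq:SIP} is a property of the problem data and is therefore unchanged, while Condition~\ref{cond:params} persists under the shift because a tail of an eventually-zero sequence is eventually zero and a tail of a summable sequence is summable. Hence each inner call terminates after finitely many steps and returns $x^{m,k_m}$ and $\eps_{m,k_m}$ satisfying~\eqref{eq:seq-algo} together with $\eps_{m,k_m} \in (0,\eps_{m,0}]$; the positivity also keeps $\eps_{m+1,0} = \eps_{m,k_m}/r > 0$ a valid input for the next call. Since the outer loop is run only for $m = 0, 1, \dots$ and halts precisely when the index reaches $m^*$, the overall algorithm terminates after finitely many iterations.

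Next I would track the restriction parameters. Feeding $\eps_{m+1,0} = \eps_{m,k_m}/r$ into the inequality $\eps_{m,k_m} \le \eps_{m,0}$ from Lemma~\ref{lm:algo-feas-fin-terminates}, a one-line induction gives $\eps_{m,k_m} \le \eps_{m,0} \le \eps_{0,0}/r^m$ for every $m$. Specializing to $m = m^*$ and using the first requirement in~\eqref{eq:m*(delta,f,eps*,eps_0,0,r,delta_m)} yields $\eps_{m^*,k_{m^*}} \le \eps_{0,0}/r^{m^*} \le \eps^*$, so that $\eps_{m^*,k_{m^*}} \in (0,\eps^*]$, which is exactly the range in which Proposition~\ref{prop:opt-values-f*_-eps-converge-to-f*} and its estimate~\eqref{eq:upper-bds-converge-to-f*-convergence-rate} are applicable.

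Then I would verify that the terminal point $x^{m^*,k_{m^*}}$ is a $\delta$-approximate solution in the sense of~\eqref{eq:delta-approx-sol-SIP}. Feasibility $x^{m^*,k_{m^*}} \in F_0(Y)$ is read off directly from~(\ref{eq:seq-algo}.a). For the value bound I would start from~(\ref{eq:seq-algo}.b) and estimate its two right-hand terms separately. Applying Proposition~\ref{prop:opt-values-f*_-eps-converge-to-f*} with $\eps := \eps_{m^*,k_{m^*}} \le \eps^*$, and then using the monotonicity of $\omega_f$ together with $\eps_{m^*,k_{m^*}} \le \eps_{0,0}/r^{m^*}$ and the second requirement in~\eqref{eq:m*(delta,f,eps*,eps_0,0,r,delta_m)}, bounds $\min_{x \in F_{-\eps_{m^*,k_{m^*}}}(Y)} f(x)$ by $\min_{x \in F_0(Y)} f(x) + \delta/2$. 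For the tolerance term, the input shift makes $\ol{\delta}_{m^*,k_{m^*}} = \ol{\delta}_{m^*+k_{m^*}}$ with $m^*+k_{m^*} \ge m^*$, so the third requirement in~\eqref{eq:m*(delta,f,eps*,eps_0,0,r,delta_m)} gives $\ol{\delta}_{m^*,k_{m^*}} \le \delta/2$. Adding the two halves produces $f(x^{m^*,k_{m^*}}) \le \min_{x \in F_0(Y)} f(x) + \delta$, which together with feasibility completes the argument.

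The substantive work has already been carried out in Lemma~\ref{lm:algo-feas-fin-terminates} (which itself rests on the termination Corollary~\ref{cor:algo_-eps-terminating}) and in the convergence rate of Proposition~\ref{prop:opt-values-f*_-eps-converge-to-f*}, so what remains is essentially careful bookkeeping. I expect the main obstacle to be exactly this bookkeeping: guaranteeing that $\eps_{m^*,k_{m^*}}$ actually lands in $(0,\eps^*]$ so that the rate estimate may be invoked, and correctly handling the index shift $\ol{\delta}_{m,k_m} = \ol{\delta}_{m+k_m}$ so that the third condition in~\eqref{eq:m*(delta,f,eps*,eps_0,0,r,delta_m)} can be applied even though $k_{m^*}$ is only known a posteriori (the inequality $m^*+k_{m^*} \ge m^*$ is what saves us).
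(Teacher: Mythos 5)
Your proposal is correct and follows essentially the same route as the paper's own proof: an induction via Lemma~\ref{lm:algo-feas-fin-terminates} to get finite termination of each inner call together with \eqref{eq:seq-algo} and $\eps_{m,k_m}\le\eps_{m,0}\le\eps_{0,0}/r^m$, followed by combining \eqref{eq:upper-bds-converge-to-f*-convergence-rate} with the three requirements in \eqref{eq:m*(delta,f,eps*,eps_0,0,r,delta_m)} and the index shift $\ol{\delta}_{m^*,k_{m^*}}=\ol{\delta}_{m^*+k_{m^*}}$ to split the error into two $\delta/2$ halves. Your explicit checks that Condition~\ref{cond:params} survives the tail shift and that $\eps_{m^*,k_{m^*}}$ lands in $(0,\eps^*]$ are points the paper leaves implicit, but they are the same argument.
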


\begin{proof}
Suppose that $(x^{m,k_m})$ and $(\ol{\delta}_{m,k})$, $(\eps_{m,k_m})$ are generated by Algorithm~\ref{algo:sequential} with termination index $m^* \in \N_0$ as in~\eqref{eq:m*(delta,f,eps*,eps_0,0,r,delta_m)} above. 
Such an $m^*$ really exists because $r > 1$ and $\ol{\delta}_m \longrightarrow 0$ as $m\to\infty$ by assumption and because $\omega_f(s) \longrightarrow 0$ as $s \to 0$ by the uniform continuity of $f$. Also, notice that an $\eps^* >0$ with $F_{-\eps^*}(Y) \ne \emptyset$ as used in~\eqref{eq:m*(delta,f,eps*,eps_0,0,r,delta_m)} really exists by virtue of our strict feasibility assumption and by virtue of~(\ref{eq:opt-values-f*_-eps-converge-to-f*,step-1}.b). In the entire proof, we abbreviate the set of iteration indices by 
\begin{align}
M := \{m\in\N_0: m\le m^*\}.
\end{align}

As a first step, we show that for every $m \in M$, Algorithm~\ref{algo:feas,finite} applied to the inputs $(\ol{\delta}_{m,k})$, $(\ul{\delta}_{k,i})$, $\rho$, $r$, $\eps_{m,0}$ and $Y^{m,0}$ terminates after $k_m < \infty$ iterations and that 
\begin{gather}
x^{m,k_m} \in F_0(Y) \qquad \text{and} \qquad f(x^{m,k_m}) \le \min_{x\in F_{-\eps_{m,k_m}}(Y)} f(x) + \ol{\delta}_{m,k_m} \label{eq:seq-algo-step-1,1}\\
\eps_{m,k_m} \in (0,\eps_{m,0}]. \label{eq:seq-algo-step-1,2}
\end{gather}
Indeed, this easily follows from Lemma~\ref{lm:algo-feas-fin-terminates} by induction in $m \in M$. In particular, Algorithm~\ref{algo:sequential} (or, more precisely, its Step 2) is well-defined in the first place. 
\smallskip

As a second step, we show that the terminal point $x^{m^*,k_{m^*}}$ of the sequence $(x^{m,k_m})$ is a $\delta$-approximate solution of~\eqref{eq:SIP}. 
Indeed, by the definition of $(\ol{\delta}_{m,k})$, $(\eps_{m,0})$ and by~\eqref{eq:seq-algo-step-1,2} 
\begin{align} \label{eq:seq-algo-step-2,1}
\ol{\delta}_{m,k_m} = \ol{\delta}_{m+k_m} 
\qquad \text{and} \qquad
\eps_{m,k_m} \le \eps_{m,0} \le \eps_{0,0}/r^m 
\end{align}
for all $m \in M$. So, combining~(\ref{eq:seq-algo-step-1,1}.b) with~\eqref{eq:upper-bds-converge-to-f*-convergence-rate} and with~\eqref{eq:m*(delta,f,eps*,eps_0,0,r,delta_m)}, we see that
\begin{align} \label{eq:seq-algo-step-2,2}
f(x^{m^*,k_{m^*}}) 
&\le \min_{x\in F_0(Y)} f(x) + \omega_f(\diam X/\eps^* \cdot \eps_{m^*,k_{m^*}}) + \ol{\delta}_{m^*,k_{m^*}} \notag\\
&\le \min_{x\in F_0(Y)} f(x) + \delta.
\end{align}
In view of~(\ref{eq:seq-algo-step-1,1}.a) and~\eqref{eq:seq-algo-step-2,2} it is now clear that $x^{m^*,k_{m^*}}$ is a $\delta$-approximate solution of~\eqref{eq:SIP}, as desired. 
\end{proof}

If, in addition to the assumptions of the previous theorem, $f$ is even Lipschitz continuous, then $\omega_f(s) \le L^* s$ for every $s\in[0,\infty)$ 
and every Lipschitz constant $L^*$ of $f$ and thus there is a simple explicit expression for $m^*$ satisfying~\eqref{eq:m*(delta,f,eps*,eps_0,0,r,delta_m)}, 
solely in terms of 
\begin{align}
\delta, \eps^*, L^*, \diam X \qquad \text{and} \qquad (\ol{\delta}_{k}), r, \eps_{0,0}.
\end{align}
It should be noticed that, under our standing convexity and continuity assumptions (Condition~\ref{cond:strictly-convex}), the aforementioned additional Lipschitz continuity assumption is not very restrictive -- because, as is well-known, a convex continuous function $f: X \to \R$ is Lipschitz continuous on every compact subset of the interior of its domain. 

\begin{cor} \label{cor:seq-algo-converges}
Suppose that Condition~\ref{cond:strictly-convex} and~\ref{cond:params} are satisfied and that~\eqref{eq:SIP} is strictly feasible. 
If the sequence $(x^{m, k_m})$ is generated by Algorithm~\ref{algo:sequential} with $m^* = \infty$, then it converges to the unique solution of~\eqref{eq:SIP}.   
\end{cor}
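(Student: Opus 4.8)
The plan is to reduce everything to a value-convergence statement and then invoke strict convexity and compactness exactly as in Corollary~\ref{cor:algo_0-convergent}. First I would note that, with $m^* = \infty$, the outer loop runs forever, and by induction on $m$ --- using Lemma~\ref{lm:algo-feas-fin-terminates}, whose hypotheses (Condition~\ref{cond:strictly-convex}, Condition~\ref{cond:params}, and strict feasibility of~\eqref{eq:SIP}) all hold --- each inner call of Algorithm~\ref{algo:feas,finite} at outer step $m$ terminates after finitely many steps $k_m < \infty$ and returns $x^{m,k_m}$ and $\eps_{m,k_m} \in (0,\eps_{m,0}]$ satisfying~\eqref{eq:seq-algo}. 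This is precisely the well-definedness already established in the first step of the proof of Theorem~\ref{thm:seq-algo}, so the full infinite sequence $(x^{m,k_m})_{m\in\N_0}$ is generated.

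Next I would establish that $f(x^{m,k_m}) \to \min_{x\in F_0(Y)} f(x)$, driven by two null sequences. On the one hand, the reset rule $\eps_{m+1,0} = \eps_{m,k_m}/r$ together with $\eps_{m,k_m} \le \eps_{m,0}$ gives $\eps_{m,k_m} \le \eps_{m,0} \le \eps_{0,0}/r^m \to 0$ since $r > 1$. On the other hand, $\ol{\delta}_{m,k_m} = \ol{\delta}_{m+k_m} \to 0$, because $m + k_m \ge m \to \infty$ and $(\ol{\delta}_k)$ is a null sequence under Condition~\ref{cond:params}. Choosing $\eps^* > 0$ with $F_{-\eps^*}(Y) \ne \emptyset$ (which exists by strict feasibility and~(\ref{eq:opt-values-f*_-eps-converge-to-f*,step-1}.b)), I would apply the rate estimate~\eqref{eq:upper-bds-converge-to-f*-convergence-rate} to the second inequality in~\eqref{eq:seq-algo}, so that for all large $m$ with $\eps_{m,k_m} \le \eps^*$
\[
f(x^{m,k_m}) \le \min_{x\in F_0(Y)} f(x) + \omega_f(\diam X \cdot \eps_{m,k_m}/\eps^*) + \ol{\delta}_{m+k_m}.
\]
Since $\omega_f(s) \to 0$ as $s \to 0$ by the uniform continuity of $f$ on the compact set $X$, the right-hand side tends to $\min_{x\in F_0(Y)} f(x)$; combined with the feasibility $x^{m,k_m} \in F_0(Y)$, which yields the reverse inequality $f(x^{m,k_m}) \ge \min_{x\in F_0(Y)} f(x)$ term by term, I conclude $f(x^{m,k_m}) \to \min_{x\in F_0(Y)} f(x) = f(x^*)$, where $x^*$ denotes the unique solution of~\eqref{eq:SIP} (Proposition~\ref{prop:ex-of-SIP-solutions}).

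Finally I would pass from value convergence to iterate convergence verbatim as in Corollary~\ref{cor:algo_0-convergent}. Let $x^{**}$ be any accumulation point of $(x^{m,k_m})$; it lies in the compact, hence closed, set $F_0(Y)$, and by continuity of $f$ one gets $f(x^{**}) = f(x^*) = \min_{x\in F_0(Y)} f(x)$, so $x^{**}$ is a minimizer over $F_0(Y)$ and therefore $x^{**} = x^*$ by uniqueness (Proposition~\ref{prop:ex-of-SIP-solutions}). Since $X$ is compact, accumulation points exist and all equal $x^*$, whence $x^{m,k_m} \to x^*$. I do not expect a genuine obstacle here; the only points demanding care are the bookkeeping ones --- verifying $\eps_{m,k_m} \to 0$ from the reset rule and that the shifted tolerance index $m + k_m$ diverges so that $\ol{\delta}_{m,k_m} \to 0$ --- after which the argument is a direct reuse of the strict-convexity and compactness machinery already deployed for the restriction parameter $\eps = 0$.
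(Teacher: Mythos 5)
Your proposal is correct and follows essentially the same route as the paper's own (much terser) proof: both combine the feasibility and near-optimality in~\eqref{eq:seq-algo} with the bounds $\eps_{m,k_m} \le \eps_{0,0}/r^m$ and $\ol{\delta}_{m,k_m} = \ol{\delta}_{m+k_m}$ from~\eqref{eq:seq-algo-step-2,1}, the convergence of the restricted optimal values from Proposition~\ref{prop:opt-values-f*_-eps-converge-to-f*}, and the closedness of $F_0(Y)$ together with uniqueness of the solution to show every accumulation point equals $x^*$. The only cosmetic differences are that you invoke the quantitative rate~\eqref{eq:upper-bds-converge-to-f*-convergence-rate} rather than its qualitative consequence~\eqref{eq:upper-bds-converge-to-f*}, and that you spell out the well-definedness via Lemma~\ref{lm:algo-feas-fin-terminates}, which the paper leaves implicit.
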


\begin{proof}
As usual, we denote the unique solution of~\eqref{eq:SIP} by $x^*$. Since $m^* = \infty$, the sequence $(x^{m,k_m})$ is non-terminating. Combining~\eqref{eq:seq-algo} and~\eqref{eq:seq-algo-step-2,1} with the closedness of $F_0(Y)$ and the convergence~\eqref{eq:upper-bds-converge-to-f*}, we conclude that every accumulation point of $(x^{m,k_m})$ is a solution of~\eqref{eq:SIP} and is thus equal to $x^*$. And therefore $(x^{m,k_m})$ converges to $x^*$, as desired. 
\end{proof}

\section{Approximate solutions with the simultaneous algorithm}

In this section, we apply the core algorithm (Algorithm~\ref{algo:-eps}) with restriction parameter $\eps = 0$ and with restriction parameters $\eps > 0$ in parallel in order to obtain our simultaneous finitely terminating algorithm (Algorithm~\ref{algo:simultaneous}).

\subsection{Simultaneous algorithm} 

\begin{algo} \label{algo:simultaneous}
Input: sequences $(\ol{\delta}_k)$, $(\ul{\delta}_{k,i})$ in $[0,\infty)$ for $i \in I$, $\rho \in [0,\infty) \cup \{\infty\}$, $r \in (1,\infty)$, a number $\eps_{0} > 0$,  fnite subsets $\check{Y}^0$, $\hat{Y}^0$ of $Y$, and a termination tolerance $\delta^* > 0$. With these inputs at hand, perform the following steps: 
\begin{enumerate}
\item Set $k=0$

\item Compute a $\ol{\delta}_k$-approximate solution $\check{x}^k$ of $\mathrm{SIP}_0(\check{Y}^k)$ 

\item Compute a $\ul{\delta}_{k,i}$-approximate solution $\check{y}^{k,i}$ of $\mathrm{Aux}_i(\check{x}^k)$ for every $i\in I$

\item Check whether or not the restricted discretized problem $\mathrm{SIP}_{-\eps_k}(\hat{Y}^k)$ is feasible
\begin{itemize}
\item If $\mathrm{SIP}_{-\eps_k}(\hat{Y}^k)$ is infeasible, then set
\begin{align*}
\eps_{k+1} := \eps_k/r, \qquad \check{Y}^{k+1} := \check{Y}^k, \qquad \hat{Y}^{k+1} := \hat{Y}^k
\end{align*}
and return to Step~2 with $k$ replaced by $k+1$

\item If $\mathrm{SIP}_{-\eps_k}(\hat{Y}^k)$ is feasible, then do the following:
\begin{enumerate}
\item[4.1] Compute a $\ol{\delta}_k$-approximate solution $\hat{x}^k$ of $\mathrm{SIP}_{-\eps_k}(\hat{Y}^k)$ 

\item[4.2] Compute a $\ul{\delta}_k$-approximate solution $\hat{y}^{k,i}$ of $\mathrm{Aux}_i(\hat{x}^k)$ for every $i\in I$

\item[4.3] Check how far $f(\check{x}^k)$ and $f(\hat{x}^k)$ are apart from each other and check the sign of $g_i(\hat{x}^k, \hat{y}^{k,i}) + \ul{\delta}_{k,i}$ for $i \in I$
\begin{itemize}
\item If $f(\hat{x}^k) > f(\check{x}^k) + \delta^*$, then set
\begin{gather*}
\eps_{k+1} := \eps_k/r, \qquad \hat{Y}^{k+1} := \hat{Y}^k \\
\check{Y}^{k+1} := \{y \in \check{Y}^k: g_i(\check{x}^k,y) \ge -\rho \text{ for some } i\in I \} \cup \{\check{y}^{k,i_k}\}
\end{gather*}
and return to Step 2 with $k$ replaced by $k+1$

\item If $f(\hat{x}^k) \le f(\check{x}^k) + \delta^*$ and $g_i(\hat{x}^k,\hat{y}^{k,i}) > -\ul{\delta}_{k,i}$ for some $i\in I$, then set
\begin{gather*}
\eps_{k+1} := \eps_k, \qquad \check{Y}^{k+1} := \check{Y}^k, \\
\hat{Y}^{k+1} := \{y \in \hat{Y}^k: g_i(\hat{x}^k,y) \ge -\eps_k -\rho \text{ for some } i\in I \} \cup \{\hat{y}^{k,i_k}\}
\end{gather*}
and return to Step 2 with $k$ replaced by $k+1$

\item If $f(\hat{x}^k) \le f(\check{x}^k) + \delta^*$ and $g_i(\hat{x}^k,\hat{y}^{k,i}) \le -\ul{\delta}_{k,i}$ for every $i\in I$, then terminate.
\end{itemize} 
\end{enumerate}
\end{itemize}
\end{enumerate}
\end{algo}

Just like in the core algorithm, the points $\check{y}^{k,i_k}$, $\hat{y}^{k,i_k}$ in the above algorithm denote strongest violators.

\subsection{Approximate solutions in finitely many iterations}

With the help of the convergence and the termination result for the core algorithm (Corollary~\ref{cor:algo_0-convergent} and~\ref{cor:algo_-eps-terminating}) and the convergence~\eqref{eq:upper-bds-converge-to-f*}, we can now establish our main finite-termination result for the simultaneous  algorithm. It says that for every given $\delta > 0$, the simultaneous  algorithm with termination tolerance $\delta^* = \delta/2$ terminates at a $\delta$-approximate solution of~\eqref{eq:SIP} after finitely many iterations, provided that $\sup_{k\in\N_0} \ol{\delta}_k < \delta/2$.

\begin{thm} \label{thm:simult-algo}
Suppose that Condition~\ref{cond:strictly-convex} and Condtion~\ref{cond:params} are satisfied and that~\eqref{eq:SIP} is strictly feasible. Suppose further that $\delta > 0$, $\eps_{0,0} > 0$ and $r \in (1,\infty)$ are given and that  
\begin{align} \label{eq:simult-algo,ass}
\sup_{k\in\N_0} \ol{\delta}_k < \delta/2.
\end{align}
If the sequence $(\hat{x}^k)$ is generated by Algorithm~\ref{algo:simultaneous} with termination tolerance $\delta^* := \delta/2$, then it terminates after finitely many iterations at a $\delta$-approximate solution of~\eqref{eq:SIP}.
\end{thm}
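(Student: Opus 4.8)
The plan is to prove the two assertions separately: that the terminal point is a $\delta$-approximate solution (the easy part), and that the algorithm terminates at all (the substance), the latter by contradiction. For the terminal quality, suppose the algorithm stops at some index $k^*$. Stopping can occur only in the third alternative of Step~4.3, so $f(\hat{x}^{k^*}) \le f(\check{x}^{k^*}) + \delta^*$ and $g_i(\hat{x}^{k^*},\hat{y}^{k^*,i}) \le -\ul{\delta}_{k^*,i}$ for all $i \in I$. Since $\hat{y}^{k^*,i}$ is a $\ul{\delta}_{k^*,i}$-approximate solution of $\mathrm{Aux}_i(\hat{x}^{k^*})$, I get $g_i(\hat{x}^{k^*},y) \le g_i(\hat{x}^{k^*},\hat{y}^{k^*,i}) + \ul{\delta}_{k^*,i} \le 0$ for all $y \in Y$ and $i \in I$, that is, $\hat{x}^{k^*} \in F_0(Y)$. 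Since $\check{x}^{k^*}$ is a $\ol{\delta}_{k^*}$-approximate solution of $\mathrm{SIP}_0(\check{Y}^{k^*})$ and $F_0(\check{Y}^{k^*}) \supset F_0(Y)$, I also get $f(\check{x}^{k^*}) \le \min_{x\in F_0(Y)} f(x) + \ol{\delta}_{k^*}$, whence $f(\hat{x}^{k^*}) \le \min_{x\in F_0(Y)} f(x) + \ol{\delta}_{k^*} + \delta^* < \min_{x\in F_0(Y)} f(x) + \delta$ by $\delta^* = \delta/2$ and the assumption $\sup_k \ol{\delta}_k < \delta/2$. Thus $\hat{x}^{k^*}$ is a $\delta$-approximate solution of~\eqref{eq:SIP}.

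For termination I would assume the contrary and exploit a dichotomy on how Step~4.3 updates $\eps_k$. First, Step~2 is always well-defined because $F_0(\check{Y}^k) \supset F_0(Y) \ne \emptyset$ by strict feasibility. Next, the infeasible branch of Step~4 can occur only finitely often: by strict feasibility and~(\ref{eq:opt-values-f*_-eps-converge-to-f*,step-1}.b) there is $\eps^* > 0$ with $F_{-\eps^*}(Y) \ne \emptyset$, and since each infeasible step divides $\eps_k$ by $r > 1$, infinitely many of them would force $\eps_k \to 0$ and hence $F_{-\eps_k}(\hat{Y}^k) \supset F_{-\eps_k}(Y) \supset F_{-\eps^*}(Y) \ne \emptyset$ from some index on, a contradiction. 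Since every iteration falls into exactly one of the four branches and the terminating branch never occurs, either case~(a) ($f(\hat{x}^k) > f(\check{x}^k) + \delta^*$) or case~(b) (gap small but feasibility violated) must occur infinitely often. If case~(a) occurs only finitely often, then, the infeasible branch being finite as well, every iteration from some $k_1$ on is case~(b); there $\eps_k$ is frozen at $\eps_* := \eps_{k_1} > 0$, $\check{Y}$ is frozen, and $\hat{Y}$ is updated by precisely the discretization rule of Algorithm~\ref{algo:-eps} with $\eps = \eps_*$. Hence $(\hat{x}^{k_1+k})_k$ is a non-terminating run of the core algorithm with restriction parameter $\eps_* \in (0,\infty)$ and shifted tolerances $(\ol{\delta}_{k_1+k})$, $(\ul{\delta}_{k_1+k,i})$ still satisfying Condition~\ref{cond:params}, which contradicts Corollary~\ref{cor:algo_-eps-terminating}.

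The remaining case, case~(a) infinitely often, is the crux. Let $k_0 < k_1 < \cdots$ enumerate the case~(a) iterations; as each divides $\eps_k$ by $r$, we have $\eps_{k_j} \to 0$. The decisive observation is that the check iterates $(\check{x}^{k_j})_j$ follow the discretization recursion of Algorithm~\ref{algo:-eps} with $\eps = 0$: between two successive case~(a) steps $\check{Y}$ is left unchanged (case~(b) and the infeasible branch do not touch it), so $\check{Y}^{k_{j+1}} = \check{Y}^{k_j+1} = \{y \in \check{Y}^{k_j}: g_i(\check{x}^{k_j},y) \ge -\rho \text{ for some } i \in I\} \cup \{\check{y}^{k_j,i_{k_j}}\}$, which is exactly that rule, with the strongest violator included. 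As the proofs of Lemma~\ref{lm:acc-pts-in-F_-eps(Y)} and Corollary~\ref{cor:algo_0-convergent} use only this recursion together with the approximate-solution properties (and a subsequence of a summable sequence is summable while ``eventually zero'' is inherited, so Condition~\ref{cond:params} persists for the subsampled tolerances), I may invoke Corollary~\ref{cor:algo_0-convergent} to get $\check{x}^{k_j} \to x^*$, the unique solution of~\eqref{eq:SIP}, and hence $f(\check{x}^{k_j}) \to \min_{x\in F_0(Y)} f(x)$. On the other hand, $\hat{x}^{k_j}$ is a $\ol{\delta}_{k_j}$-approximate solution of $\mathrm{SIP}_{-\eps_{k_j}}(\hat{Y}^{k_j})$ with $F_{-\eps_{k_j}}(\hat{Y}^{k_j}) \supset F_{-\eps_{k_j}}(Y)$, so $f(\hat{x}^{k_j}) \le \min_{x\in F_{-\eps_{k_j}}(Y)} f(x) + \ol{\delta}_{k_j}$; with $\eps_{k_j}\to 0$, $\ol{\delta}_{k_j}\to 0$ and Proposition~\ref{prop:opt-values-f*_-eps-converge-to-f*} this gives $\limsup_j f(\hat{x}^{k_j}) \le \min_{x\in F_0(Y)} f(x)$. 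Combining the two limits yields $\limsup_j \big(f(\hat{x}^{k_j}) - f(\check{x}^{k_j})\big) \le 0$, contradicting the defining inequality $f(\hat{x}^{k_j}) - f(\check{x}^{k_j}) > \delta^* > 0$ of case~(a).

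Both branches of the dichotomy being impossible, the algorithm must terminate, and by the first paragraph it does so at a $\delta$-approximate solution. The step I expect to be the main obstacle is the crux above: making fully rigorous that $(\check{x}^{k_j})$ is a legitimate non-terminating instance of the $\eps=0$ core algorithm despite the intervening case~(b) and infeasible iterations that merely freeze $\check{Y}$ (in particular, that the core termination test is irrelevant here because the cited proofs rely only on the recursion and the approximate-solution properties), so that the convergence $f(\check{x}^{k_j}) \to \min_{x\in F_0(Y)} f(x)$ becomes available to collapse the objective gap against the upper bound coming from $\eps_{k_j}\to 0$.
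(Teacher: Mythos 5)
Your proposal is correct and follows essentially the same route as the paper's own proof: the paper likewise disposes of the infeasible branch via strict feasibility, handles the large-gap case by recognizing the check iterates $(\check{x}^{k_j})$ as an (effectively) non-terminating $\eps = 0$ core run and playing Corollary~\ref{cor:algo_0-convergent} against the upper bound from Proposition~\ref{prop:opt-values-f*_-eps-converge-to-f*}, handles the remaining case via Corollary~\ref{cor:algo_-eps-terminating} with the frozen $\eps_* > 0$, and verifies the terminal point exactly as you do. Your deviations are purely organizational — a non-termination dichotomy instead of proving each index set $K_1$, $K_2$, $K_3$ finite, and a limsup argument in place of the paper's explicit $\delta_0/2$ bounds — and your remark that the core-algorithm lemmas use only the recursion and approximate-solution properties (not the core termination test) is a valid and welcome clarification of a point the paper glosses over.
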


\begin{proof}
Suppose that $(\check{Y}^k), (\check{x}^k), (\check{y}^{k,i})$ and $(\hat{Y}^k), (\hat{x}^k), (\hat{y}^{k,i})$ and $(\eps_k)$ are generated by Algorithm~\ref{algo:simultaneous} with termination tolerance $\delta^* := \delta/2$ and let $K$ be the index set of the aforementioned sequences or, in other words, the set of iteration indices. We will show (in three steps) that each of the three (non-termination) cases
\begin{gather}
F_{-\eps_k}(\hat{Y}^k) = \emptyset \label{eq:simult-algo,case-1}\\
F_{-\eps_k}(\hat{Y}^k) \ne \emptyset \quad \text{and} \quad f(\hat{x}^k) > f(\check{x}^k) + \delta/2 \label{eq:simult-algo,case-2}\\
F_{-\eps_k}(\hat{Y}^k) \ne \emptyset \quad \text{and} \quad f(\hat{x}^k) > f(\check{x}^k) + \delta/2 \quad \text{and} \quad \max_{i\in I} (g_i(\hat{x}^k,\hat{y}^{k,i}) + \ul{\delta}_{k,i}) > 0 
\label{eq:simult-algo,case-3}
\end{gather}
can occur only for finitely many iteration indices $k \in K$. And from this, in turn, we can easily conclude (in a fourth step) that the complementary (termination) case
\begin{align} \label{eq:simult-algo,case-4}
F_{-\eps_k}(\hat{Y}^k) \ne \emptyset \quad \text{and} \quad f(\hat{x}^k) > f(\check{x}^k) + \delta/2 \quad \text{and} \quad \max_{i\in I} (g_i(\hat{x}^k,\hat{y}^{k,i}) + \ul{\delta}_{k,i}) \le 0 
\end{align}
occurs for exactly one iteration index $k^* \in K$, that the algorithm terminates for this $k^*$, and that $\hat{x}^{k^*}$ is a $\delta$-approximate solution of the original optimization problem~\eqref{eq:SIP}. 
In the entire proof, we will denote the subset of those iteration indices for which case~\eqref{eq:simult-algo,case-1}, case~\eqref{eq:simult-algo,case-2}, case~\eqref{eq:simult-algo,case-3},  or case~\eqref{eq:simult-algo,case-4} occurs, respectively, by $K_1$, $K_2$, $K_3$, $K_4$. We will also write
\begin{align} \label{eq:simult-algo,delta_0}
\delta_0 := \delta/2 - \sup_{k\in\N_0} \ol{\delta}_k 
\end{align} 
which is strictly positive by assumption~\eqref{eq:simult-algo,ass}. 
Clearly, $K = K_1 \cup K_2 \cup K_3 \cup K_4$. 
Also, if $K_i$ is infinite for some $i \in \{1,2,3\}$, then in particular the algorithm does not terminate and therefore 
\begin{align} \label{eq:simult-algo,prelim}
K = \N_0 \qquad \text{and} \qquad K_4 = \emptyset.
\end{align}
As a final preliminary, we observe that the sequence $(\eps_k)$ is  monotonically decreasing by the definition of Algorithm~\ref{algo:simultaneous} and our assumption that $r > 1$. 
\smallskip

As a first step, we show that case~\eqref{eq:simult-algo,case-1} can occur only for finitely many iteration indices $k$ (in other words: $K_1$ is finite). 
Assume that, on the contrary, $K_1$ is infinite and denote by $k_l$ the $l$th element of $K_1$ for $l\in\N_0$. It then follows by the decreasingness of $(\eps_k)$ and the definition of Algorithm~\ref{algo:simultaneous} in the case~\eqref{eq:simult-algo,case-1} that $\eps_k \le \eps_{k_l+1} = \eps_{k_l}/r$ for every $k \ge k_l+1$ and every $l \in \N_0$ and therefore
\begin{align} \label{eq:simult-algo,step-1,1}
\eps_{k} \longrightarrow 0 \qquad (k\to\infty). 
\end{align}
So, by the assumed strict feasibility of~\eqref{eq:SIP} and by~(\ref{eq:opt-values-f*_-eps-converge-to-f*,step-1}.b), there exists a $k^* \in \N_0$ such that 
\begin{align}
F_{-\eps_k}(\hat{Y}^k) \supset F_{-\eps_k}(Y) \supset F_{-\eps_{k^*}}(Y) \ne \emptyset
\qquad (k\ge k^*). 
\end{align}
In particular, case~\eqref{eq:simult-algo,case-1} can occur at most for the -- finitely many -- iteration indices $k < k^*$. Contradiction to our assumption that $K_1$ is infinite!
\smallskip

As a second step, we show that case~\eqref{eq:simult-algo,case-2} can occur only for finitely many iteration indices $k$ (in other words: $K_2$ is finite).  
Assume that, on the contrary, $K_2$ is infinite, then the subset
\begin{align} \label{eq:simult-algo,K_2'}
K_2' := K_2 \cap \{k \ge k_1^*\} \qquad (k_1^* := \sup K_1 +1)
\end{align}
is infinite as well, because $k_1^* < \infty$ by the first step (notice that $k_1^* = -\infty$ in case $K_1 = \emptyset$). We denote by $k_l$ 
the $l$th element of $K_2'$ for $l \in \N_0$. 
As a first substep, we show that eventually $f(\hat{x}^{k_l})$ cannot be much larger than $\min_{x\in F_0(Y)} f(x)$, more precisely: there exists an $l_1^* \in \N_0$ such that
\begin{align} \label{eq:simult-algo,step-2.1}
f(\hat{x}^{k_l}) \le \min_{x\in F_0(Y)} f(x) + \ol{\delta}_{k_l} + \delta_0/2 
\qquad (l\ge l_1^*).
\end{align}
Indeed, by the decreasingness of $(\eps_k)$ and the definition of Algorithm~\ref{algo:simultaneous} in the case~\eqref{eq:simult-algo,case-2}, we have $\eps_{k_{l+1}} \le \eps_{k_l+1} = \eps_{k_l}/r$ for every $l \in \N_0$ and therefore
\begin{align} \label{eq:simult-algo,step-2.1,1}
\eps_{k_l} \longrightarrow 0 \qquad (l\to\infty). 
\end{align}
So, by the assumed strict feasibility of~\eqref{eq:SIP} and by Proposition~\ref{prop:opt-values-f*_-eps-converge-to-f*}, there exists an $l_1^* \in \N_0$ such that
\begin{align} \label{eq:simult-algo,step-2.1,2}
\min_{x\in F_{-\eps_{k_l}}(Y)} f(x) \le \min_{x\in F_0(Y)} f(x) + \delta_0/2 
\qquad (l\ge l_1^*).
\end{align}
Since, moreover, $\hat{x}^{k_l}$ is a $\ul{\delta}_{k_l}$-approximate solution of $\mathrm{SIP}_{-\eps_{k_l}}(\hat{Y}^{k_l})$ and $F_{-\eps_{k_l}}(\hat{Y}^{k_l}) \supset F_{-\eps_{k_l}}(Y)$ for every $l\in\N_0$, the asserted estimate~\eqref{eq:simult-algo,step-2.1} follows from~\eqref{eq:simult-algo,step-2.1,2}. 
As a second substep, we show that eventually $f(\check{x}^{k_l})$ cannot be much smaller than $\min_{x\in F_0(Y)} f(x)$, more precisely: there exists an $l_2^* \in \N_0$ such that
\begin{align} \label{eq:simult-algo,step-2.2}
f(\check{x}^{k_l}) \ge \min_{x\in F_0(Y)} f(x) - \delta_0/2 
\qquad (l\ge l_2^*). 
\end{align}
Indeed, by the definition of $k_1^*$ and by~\eqref{eq:simult-algo,prelim} we see that for every $k \in \N_0$ with $k \ge k_1^*$ either case~\eqref{eq:simult-algo,case-2} or case~\eqref{eq:simult-algo,case-3} must hold true, more precisely: case~\eqref{eq:simult-algo,case-2} holds if $k$ is equal to $k_l$ for some $l\in\N_0$ while case~\eqref{eq:simult-algo,case-3} holds if $k$ lies strictly between $k_l$ and $k_{l+1}$ for some $l\in\N_0$. So, by the definition of Algorithm~\ref{algo:simultaneous} in the case~\eqref{eq:simult-algo,case-2} and the case~\eqref{eq:simult-algo,case-3}, respectively, we see the following two facts: 
on the one hand, $\check{x}^{k_l}$ is a $\ol{\delta}_{k_l}$-approximate solution of $\mathrm{SIP}_0(\check{Y}^{k_l})$, $\check{y}^{k_l,i}$ is a $\ul{\delta}_{k_l,i}$-approximate solution of $\mathrm{Aux}_i(\check{x}^{k_l})$ and 
\begin{align}
\check{Y}^{k_l+1} = \{y \in \check{Y}^{k_l}: g_i(\check{x}^{k_l},y) \ge -\rho \text{ for some } i \in I\} \cup \{\check{y}^{k_l,i_{k_l}}\}
\end{align} 
for every $l\in\N_0$ and, on the other hand,  
\begin{align}
\check{Y}^{k_{l+1}} = \dotsb = \check{Y}^{k_l+1}
\end{align}
for every $l\in\N_0$. 
Consequently, $(\check{x}^{k_l})$ can be generated by Algorithm~\ref{algo:-eps} with $\eps := 0$ and with input $(\ol{\delta}_{k_l})$, $(\ul{\delta}_{k_l,i})$, $\rho$. And this, in turn, implies that $(\check{x}^{k_l})$ converges to the unique solution $x^*$ of~\eqref{eq:SIP} by virtue of Corollary~\ref{cor:algo_0-convergent}. In particular,
\begin{align}
f(\check{x}^{k_l}) \longrightarrow f(x^*) = \min_{x\in F_0(Y)} f(x)
\qquad (l\to\infty),
\end{align}
from which the asserted estimate~\eqref{eq:simult-algo,step-2.2} immediately follows. 
Combining now~\eqref{eq:simult-algo,step-2.1} and~\eqref{eq:simult-algo,step-2.2}, we conclude that
\begin{align}
f(\hat{x}^{k_l}) \le f(\check{x}^{k_l}) + \delta_0 + \ol{\delta}_{k_l}
\le f(\check{x}^{k_l}) + \delta/2 
\qquad (l\ge l^*),
\end{align}
where $l^* := \max\{l_1^*,l_2^*\}$. Contradiction to the fact that $f(\hat{x}^k) > f(\check{x}^k) + \delta/2$ for every $k \in K_2 \supset K_2' = \{k_l: l\in\N_0\}$!
\smallskip

As a third step, we show that case~\eqref{eq:simult-algo,case-3} can occur only for finitely many iteration indices $k$ (in other words: $K_3$ is finite).  
Assume that, on the contrary, $K_3$ is infinite, then the subset
\begin{align} \label{eq:simult-algo,K_3'}
K_3' := K_3 \cap \{k\ge k_1^*\} \cap \{k\ge k_2^*\} 
\qquad (k_i^* := \sup K_i + 1)
\end{align}
is infinite as well, because $k_1^* < \infty$ and $k_2^* < \infty$ by the first and the second step, respectively. We denote by $k_l$ the $l$th element of $K_3'$ for $l\in\N_0$. It follows by the definition of $k_1^*$ and $k_2^*$ and by~\eqref{eq:simult-algo,prelim} that for every $k\in\N_0$ with $k\ge \max\{k_1^*,k_2^*\}$, the case~\eqref{eq:simult-algo,case-3} must hold true. And therefore
\begin{align} \label{eq:simult-algo,step-3,1}
k_l = \min K_3' + l \qquad (l\in\N_0). 
\end{align}
So, by the definition of Algorithm~\ref{algo:simultaneous} in the case~\eqref{eq:simult-algo,case-3}, we see the following facts: on the one hand,
\begin{align}
\eps_{k_l} = \dotsb = \eps_{k_0} =: \eps_* 
\end{align} 
for every $l\in\N_0$ and thus, on the other hand, $\hat{x}^{k_l}$ is a $\ol{\delta}_{k_l}$-approximate solution of $\mathrm{SIP}_{-\eps_*}(\hat{Y}^{k_l})$, $\hat{y}^{k_l,i}$ is a $\ul{\delta}_{k_l,i}$-approximate solution of $\mathrm{Aux}_i(\hat{x}^{k_l})$ and 
\begin{align}
\hat{Y}^{k_{l+1}} = \hat{Y}^{k_l+1} = \{y \in \hat{Y}^{k_l}: g_i(\hat{x}^{k_l},y) \ge -\eps_*-\rho \text{ for some } i \in I\} \cup \{\hat{y}^{k_l,i_{k_l}}\}
\end{align}
for every $l\in\N_0$. Consequently, $(\hat{x}^{k_l})$ can be generated by Algorithm~\ref{algo:simultaneous} with $\eps := \eps_* \in (0,\infty)$ and with input $(\ol{\delta}_{k_l})$, $(\ul{\delta}_{k_l,i})$, $\rho$. And this, in turn, implies that $(\hat{x}^{k_l})$ terminates by virtue of Corollary~\ref{cor:algo_-eps-terminating}. Contradiction to the fact that $\{k_l: l\in\N_0\} = K_3'$ is infinite!
\smallskip

As a fourth and last step, we show that $(\hat{x}^k)$ terminates with a $\delta$-approximate solution $\hat{x}^{k^*}$ of~\eqref{eq:SIP}. 
Indeed, by the finiteness of $K_1, K_2, K_3$ established in the first three steps, the sequence $(\hat{x}^k)$ must terminate. (If it did not terminate, then the set of iteration indices would be $K = \N_0$ and the set of termination indices would be $K_4 = \emptyset$ and therefore we would have
\begin{align}
\N_0 = K = K_1 \cup K_2 \cup K_3 \cup K_4 = K_1 \cup K_2 \cup K_3. 
\end{align}
Contradiction to the finiteness of $K_1, K_2, K_3$!) 
We denote the iteration index for which $(\hat{x}^k)$ terminates by $k^*$. Consequently, for this $k^*$ the termination case~\eqref{eq:simult-algo,case-4} is satisfied. In particular, 
\begin{align} \label{eq:simult-algo,step-4,1}
f(\hat{x}^{k^*}) \le f(\check{x}^{k^*}) + \delta/2 
\qquad \text{and} \qquad
g_i(\hat{x}^{k^*}, \hat{y}^{k^*,i}) \le -\ul{\delta}_{k^*,i} \qquad (i \in I).
\end{align}
Since $\check{x}^{k^*}$ is a $\ol{\delta}_{k^*}$-approximate solution of $\mathrm{SIP}_0(\check{Y}^{k^*})$ and since $F_0(\check{Y}^{k^*}) \supset F_0(Y)$, it follows from~(\ref{eq:simult-algo,step-4,1}.a) and~\eqref{eq:simult-algo,ass} that
\begin{align} \label{eq:simult-algo,step-4,2}
f(\hat{x}^{k^*}) \le \min_{x\in F_0(\check{Y}^{k^*})} f(x) + \ol{\delta}_{k^*} + \delta/2 
&\le \min_{x\in F_0(Y)} f(x) + \ol{\delta}_{k^*} + \delta/2 \notag\\
&\le \min_{x\in F_0(Y)} f(x) + \delta.
\end{align}
Since, moreover, $\hat{y}^{k^*,i}$ is a $\ul{\delta}_{k^*,i}$-approximate solution of $\mathrm{Aux}_i(\hat{x}^{k^*})$, it further follows from~(\ref{eq:simult-algo,step-4,1}.b) that 
\begin{align} \label{eq:simult-algo,step-4,3}
g_i(\hat{x}^{k^*},y) \le \max_{y\in Y} g_i(\hat{x}^{k^*},y) \le g_i(\hat{x}^{k^*},\hat{y}^{k^*,i}) + \ul{\delta}_{k^*,i} \le 0
\qquad (y \in Y \text{ and } i \in I)
\end{align}
or, in other words, that $\hat{x}^{k^*}$ is feasible for~\eqref{eq:SIP}. Combining~\eqref{eq:simult-algo,step-4,2} and~\eqref{eq:simult-algo,step-4,3}, we finally see that $\hat{x}^{k^*}$ indeed is a $\delta$-approximate solution of~\eqref{eq:SIP}, as desired. 
\end{proof}

\section{Shape-constrained regression}

We now show how the algorithms and results developed above can be applied  to solve shape-constrained regression problems to arbitrary precision. Shape-constrained regression is about finding a model $u \mapsto v(u)$ for some true functional relationship of interest 
which, on the one hand, optimally fits given data points $(u_1,t_1), \dots, (u_N,t_N)$ and which, on the other hand, satisfies 
certain shape constraints known about the relationship to be modeled. 
In the following, we will express the deviation of the model $u \mapsto v(u)$ from the data in terms of the mean-squared error and, moreover, we will confine ourselves to shape constraints that can be expressed in terms of the partial derivatives of $u \mapsto v(u)$, like monotonicity constraints or convexity constraints, for instance. So, the shape-constrained regression problems considered here, take the following form:
\begin{equation}  \label{eq:SCRP-MSE}
\begin{gathered}
\min_{v\in V} \sum_{l=1}^N \big(v(u_l)-t_l\big)^2 + \sigma(v)  
\quad \text{s.t.} \quad 
\psi_i\big( u, (\partial^{\alpha} v(u))_{|\alpha|\le m}\big) \le 0 \\
\quad \text{for all } u \in U \text{ and } i \in I,
\end{gathered}
\end{equation}
where $U \subset \R^d$ and $V \subset C^m(U,\R)$ is a suitable set of model ansatz functions $v$ and where $\sigma(v)$ is a regularization term. Clearly, this is a semi-infinite optimization problem of the general form~\eqref{eq:SIP} considered above. Simple sufficient conditions for the continuity and convexity assumptions (Condtion~\ref{cond:cont} and~\ref{cond:strictly-convex}) to be satisfied are as follows.

\begin{prop}
Suppose that $U$ is compact in $\R^d$ and $V$ is a compact subset of 
$C^m(U,\R)$ and that $\psi_i: \R \times \R^{\ol{m}} \to \R$ is a locally Lipschitz continuous function for every $i \in I$, where $I$ is some finite index set and $\ol{m} := \binom{m+d}{d}$ is the number of multi-indices $\alpha \in \N_0^d$ with degree $|\alpha| \le m$. Suppose further that $\sigma: V \to [0,\infty)$ is continuous and that, for some given data points $(u_1,t_1), \dots, (u_N,t_N) \in U\times \R$, 
\begin{align}
f(v) := \sum_{l=1}^N \big(v(u_l)-t_l\big)^2 + \sigma(v)
\qquad \text{and} \qquad
g_i(v,u) := \psi_i\big( u, (\partial^{\alpha} v(u))_{|\alpha|\le m}\big)
\end{align} 
for $u \in U$ and $v \in V$. Then $f \in C(V,\R)$ and $g_i \in C(V\times U,\R)$ for every $i \in I$.
If, in addition, $V$ is convex, $\sigma$ is strictly convex and the functions $\psi_i(u,\cdot)$ are affine, 
then $f$ is strictly convex and $g_i(\cdot,u)$ is convex for every $u \in U$ and $i \in I$.  
\end{prop}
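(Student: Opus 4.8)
The plan is to prove the two assertions separately, and in each to treat $f$ and the $g_i$ one at a time; but first I would isolate the one structurally important observation on which everything else rests, namely that for every multi-index $\alpha$ with $|\alpha|\le m$ the evaluated-derivative map $(v,u)\mapsto\partial^\alpha v(u)$ from $V\times U$ into $\R$ is \emph{jointly} continuous and, for fixed $u$, \emph{linear} in $v$. The linearity is immediate since differentiation and point evaluation are linear operations. The joint continuity is the heart of the matter and the only nontrivial point I expect: it uses that the topology on $V\subset C^m(U,\R)$ is the $C^m$-topology, under which $v_n\to v$ forces uniform convergence $\partial^\alpha v_n\to\partial^\alpha v$ of all derivatives up to order $m$. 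Concretely, for $(v_n,u_n)\to(v,u)$ I would estimate $|\partial^\alpha v_n(u_n)-\partial^\alpha v(u)|\le\norm{v_n-v}_{C^m}+|\partial^\alpha v(u_n)-\partial^\alpha v(u)|$, where the first summand vanishes by $C^m$-convergence and the second by continuity of $\partial^\alpha v$ together with $u_n\to u$.

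For the continuity assertion, the continuity of $g_i$ then follows at once: by the observation above the map $(v,u)\mapsto\big(u,(\partial^\alpha v(u))_{|\alpha|\le m}\big)$ is continuous into the domain of $\psi_i$, and composing it with $\psi_i$, which is continuous because it is locally Lipschitz, yields $g_i\in C(V\times U,\R)$. For $f$, each point-evaluation $v\mapsto v(u_l)$ is a bounded linear functional in the $C^m$-norm (since $|v(u_l)|\le\norm{v}_{C^m}$) and hence continuous, so each summand $v\mapsto(v(u_l)-t_l)^2$ is continuous; adding the regularizer $\sigma$, continuous by hypothesis, gives $f\in C(V,\R)$.

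For the convexity assertion I would again exploit the linearity of $v\mapsto v(u_l)$ and of $v\mapsto\partial^\alpha v(u)$. Fixing $u$ and composing the linear map $v\mapsto(\partial^\alpha v(u))_{|\alpha|\le m}$ with the affine function $\psi_i(u,\cdot)$ shows that $g_i(\cdot,u)$ is affine, in particular convex, for every $u\in U$ and $i\in I$. For $f$, each summand $(v(u_l)-t_l)^2$ is the square of an affine functional of $v$ and is therefore convex, so the data-fitting term is convex; adding the strictly convex $\sigma$ makes $f$ strictly convex, via the elementary fact that the sum of a convex and a strictly convex function is strictly convex. Apart from the joint-continuity step flagged above, every step here is routine, so I anticipate no further obstacle.
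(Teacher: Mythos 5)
Your proof is correct and is precisely the elementary verification that the paper leaves to the reader: the paper's entire proof of this proposition reads ``An elementary verification.'' Your write-up supplies exactly the intended details --- the joint continuity of $(v,u)\mapsto\partial^\alpha v(u)$ via the $C^m$-norm estimate, continuity of $f$ and $g_i$ by composition, and the convexity claims from linearity of evaluation/differentiation together with the affineness of $\psi_i(u,\cdot)$ and strict convexity of $\sigma$ --- and all steps are sound.
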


\begin{proof}
An elementary verification.
\end{proof}

Very roughly, shape-constrained regression problems can be categorized according to whether the chosen ansatz function set $V$ is finite-dimensionally parametrized 
or not: 
\begin{align*}
\dim \spn V < \infty \qquad \text{or} \qquad \dim \spn V = \infty,
\end{align*}
respectively. Accordingly, we will speak of parametric or, respectively, non-parametric shape-constrained regression problems.

\begin{ex}
A simple example of a finite-dimensionally parametrized set $V$ as above is given by
\begin{gather}
V := \{ \text{polynomials on $U$ of degree at most $n$ with coefficients contained in $W$}\} \notag \\
= \{v_w: w \in W\},
\end{gather}
where $U \subset \R^d$ is compact and $v_w(u) := \sum_{|\alpha| \le n} w_{\alpha} u^{\alpha}$ for $u\in U$ and where $W$ is a compact convex subset of $\R^{\ol{n}}$ with $\ol{n} := \binom{n+d}{d}$. As a strictly convex regularization term, we can choose $\sigma(v_w) := \varsigma |w|^2$ with some arbitrary $\varsigma > 0$. If the functions $\psi_i(u,\cdot)$ are affine, then the lower-level problems
\begin{align}
\max_{u \in U} g_i(u,v_w) = \max_{u \in U} \psi_i\big( u, (\partial^{\alpha} v_w(u))_{|\alpha|\le m}\big)
\end{align}
are multivariate polynomial optimization problems and thus tailor-made solvers like~\cite{HeLa02} (based on~\cite{La01}) or~\cite{HaJi03} can be used to approximately solve them. $\blacktriangleleft$ 
\end{ex}

\begin{ex}
A simple example of a not finite-dimensionally parametrized set $V$ as above is given by
\begin{align}
V := \{v \in C^{m+1}(U,\R): \norm{v}_{C^{m+1}} \le R \},
\end{align}
where $U \subset \R^d$ is compact and convex with non-empty interior and $R \in (0,\infty)$. 
Indeed, this set is a compact subset of $C^m(U,\R)$ by the Arzelà-Ascoli theorem. 
In order to see that $\spn V$ is indeed infinite-dimensional, notice that it contains all the monomials $U \ni u \mapsto u^{\alpha}$ for $\alpha \in \N_0^d$ and that these monomials make up an infinite linearly independent set. 
As a strictly convex regularization term, we can choose $\sigma(v) := \varsigma \norm{v}_{L^2(U,\R)}^2$ with some arbitrary $\varsigma > 0$.  
$\blacktriangleleft$
\end{ex}

\begin{small}

\end{small}

\end{document}